\theoremstyle{plain}
\newtheorem{theorem}{Theorem}[section]
\newtheorem{lemma}{Lemma}[section]
\newtheorem{proposition}{Proposition}[section]
\newtheorem{remark}{Remark}[section]
\theoremstyle{definition}
\newtheorem{example}{Example}[section]
\numberwithin{equation}{section}
\DeclareMathOperator{\GL}{GL}
\DeclareMathOperator{\Sp}{Sp}
\DeclareMathOperator{\SL}{SL}
\DeclareMathOperator{\Tr}{Tr}
\title{	
Random walk on the symplectic forms over a finite field
}
\author{Jimmy He}
\thanks{This research was supported in part by NSERC}
\thanks{The author thanks Persi Diaconis for introducing the problem to him and for helpful discussions}
\address{Department of Mathematics, Stanford University, Stanford, CA  94305}
\email{jimmyhe@stanford.edu}
\begin{document}
\begin{abstract}
Random transvections generate a walk on the space of symplectic forms on $\mathbf{F}_q^{2n}$. The main result is to establish cutoff for this Markov chain. After $n+c$ steps, the walk is close to uniform while before $n-c$ steps, it is far from uniform. The upper bound is proved by explicitly finding and bounding the eigenvalues of the random walk. The lower bound is found by showing that the support of the walk is exponentially small if only $n-c$ steps are taken. The result can be viewed as a $q$-deformation of a result of Diaconis and Holmes on a random walk on matchings.
\end{abstract}
\maketitle
\section{Introduction}
In this paper a random walk on the Gelfand pair $\GL_{2n}(\mathbf{F}_q)/\Sp_{2n}(\mathbf{F}_q)$, which may be identified with the space of symplectic forms on $\mathbf{F}_q^{2n}$, is analyzed. This walk is a $q$-deformation of a walk on random matchings studied by Diaconis and Holmes \cite{DH02}. The eigenvalues of the random walk are obtained through a connection with the random transvection walk studied by Hildebrand \cite{H92} and cutoff is obtained.

A good overview of the use of Gelfand pairs to analyze Markov chains can be found in \cite{D88} or \cite{CST08}. The theory of Gelfand pairs was previously used to study the Bernoulli-Laplace model \cite{DS87}. There are also analogues in the continuous setting, see for example \cite{M14} where cutoff for Brownian motion on Riemannian symmetric spaces is proven. While this work analyzes just one family of Markov chains on the space of symplectic forms, comparison techniques developed in \cite{DSC93} allow upper bounds to be obtained for other walks on the same space.

A transvection on a vector space $V$ is a linear map of the form $I+vf$ for $v\in V$ and $f\in V^*$. Fix some symplectic form $\omega$ on $\mathbf{F}_q^{2n}$ and let $K\subseteq \GL_{2n}(\mathbf{F}_q)$ denote the subgroup preserving $\omega$. Then the transvections that do not preserve $\omega$ form a single double coset, denoted $Kg_\mu K$.

Let $U$ denote the uniform measure on $\GL_{2n}(\mathbf{F}_q)$, $P$ denote the uniform measure on $Kg_\mu K$, and $D$ denote the uniform measure on matrices of the form $\mathrm{diag}(\alpha,1,...,1)$, for $\alpha\in\mathbf{F}_q^*$. Use $\|\cdot \|$ to denote the total variation norm for measures. The main results are the following theorems.

\begin{theorem}
\label{thm:upper bound}
There exists constants $A,B>0$ such that the upper bound
\begin{equation*}
\|P^{\ast (n+c)}\ast D-U\|\leq Ae^{-Bc}
\end{equation*}
holds for all $c\geq 0$ and for all sufficiently large $n$.
\end{theorem}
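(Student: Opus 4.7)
The plan is to apply the standard $L^2$ upper bound lemma combined with the Gelfand pair structure. Since $P$ is bi-invariant under $K=\Sp_{2n}(\mathbf{F}_q)$, its Fourier transform $\hat{P}(\rho)$ on any irreducible representation $\rho$ of $\GL_{2n}(\mathbf{F}_q)$ vanishes unless $\rho$ is $K$-spherical; when $\rho=\rho_\lambda$ is spherical, $\hat{P}(\rho_\lambda)$ is a rank-one operator scaled by the spherical eigenvalue $\hat{P}(\lambda)$. Writing $Q=P^{\ast(n+c)}\ast D$ and using that $\|\hat{D}(\rho)\|_{\mathrm{op}}\le 1$ for every $\rho$, the upper bound lemma reduces to
\[
4\|Q-U\|^2 \leq \sum_{\lambda\neq\mathrm{triv}} d_\lambda \bigl|\hat{P}(\lambda)\bigr|^{2(n+c)},
\]
where the sum is over nontrivial spherical representations with dimensions $d_\lambda$. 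At this stage $D$ plays essentially no role beyond guaranteeing full support: any probability measure with bounded Fourier coefficients would fit into the same estimate.

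The next step is to make the right-hand side explicit. The spherical representations for $(\GL_{2n}(\mathbf{F}_q),\Sp_{2n}(\mathbf{F}_q))$ are classified and are indexed combinatorially by partitions $\lambda$ with $|\lambda|\le n$, with known $q$-polynomial dimension formulas. The eigenvalue $\hat{P}(\lambda)$ can then be pinned down using the connection to Hildebrand \cite{H92}: on the $\GL_{2n}(\mathbf{F}_q)$-irreducible carrying $\rho_\lambda$, averaging a transvection over its $K$-double coset must act as one of the transvection eigenvalues computed by Hildebrand. Identifying exactly which eigenvalue appears gives an explicit formula of the form $\hat{P}(\lambda)=q^{-f(\lambda)}\cdot(\text{bounded factor})$ for an explicit function $f(\lambda)$ increasing in $|\lambda|$.

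With the eigenvalue formula in hand, bounding the series is a combinatorial estimate closely paralleling Hildebrand's argument: the polynomial growth in $d_\lambda$ is dominated by the geometric decay of $|\hat{P}(\lambda)|^{2(n+c)}$ once $n+c$ steps are taken, and the residual sum collapses to a geometric series in $c$ with rate independent of $n$, yielding a bound of the form $Ae^{-Bc}$.

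The main obstacle is the middle step: matching each spherical $\rho_\lambda$ to a specific Hildebrand transvection eigenvalue. This requires understanding how the transvection double coset in $\GL_{2n}(\mathbf{F}_q)$ interacts with the $K$-fixed vector in $\rho_\lambda$, and is precisely the point at which the $q$-deformation of the Diaconis--Holmes matching chain becomes nontrivial. Once this identification is made, the final combinatorial bound, while requiring care to avoid losing factors in $n$, follows the template of \cite{H92}.
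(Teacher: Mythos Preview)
Your outline follows the same architecture as the paper (upper bound lemma, spherical Fourier analysis, then Hildebrand-style estimates), but there are two concrete gaps that would stop the argument from going through as written.

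First, the indexing and the decay parameter are misidentified. The spherical representations of $(\GL_{2n}(\mathbf{F}_q),\Sp_{2n}(\mathbf{F}_q))$ are not indexed by partitions of size $\le n$; they are indexed by partition-valued functions $\lambda:O(L)\to\mathcal{P}$ with $\|\lambda\|=n$ (the spherical representation being $V_{\lambda\cup\lambda}$, of dimension $d_{\lambda\cup\lambda}$). In particular every nontrivial $\lambda$ has the \emph{same} total size $n$, so an eigenvalue estimate of the form $\hat P(\lambda)=q^{-f(\lambda)}$ with $f$ ``increasing in $|\lambda|$'' gives no leverage at all. The quantity that actually governs the decay is $i=n-\max_{d(\varphi)=1}l(\lambda(\varphi))$, and the bound one proves is roughly $\phi_\lambda\lesssim q^{-2i}$ (with a separate trivial bound $|\phi_\lambda|\le (q^{2n-2}-1)^{-1}$ for the negative eigenvalues). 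The combinatorial estimate then needs a genuine case split---negative eigenvalues, small length ($i\ge n^{0.6}$), large length ($i<n^{0.6}$)---balanced against a dimension bound $\sum_{l(\lambda(\varphi))=n-i}d_{\lambda\cup\lambda}\lesssim q^{2i+4in-2i^2}$. This is more delicate than a single geometric series in $c$.

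Second, the ``main obstacle'' you flag---matching each spherical $\rho_\lambda$ to a Hildebrand eigenvalue---has a clean resolution that your outline does not supply. One does not need to study how the transvection acts on the $K$-fixed vector directly. Instead, observe that a uniformly random transvection in $\GL_{2n}(\mathbf{F}_q)$ is symplectic with probability $b=(q-1)/(q^{2n-1}-1)$ and otherwise lies in the single double coset $Kg_\mu K$; hence the transition operators satisfy $T=aS+bI$ with $a=1-b$. Since $T$ has eigenvalue $\chi_{\lambda\cup\lambda}(g_\mu)/d_{\lambda\cup\lambda}$ on $V_{\lambda\cup\lambda}$ (Hildebrand's character ratio, for the specific representation $\lambda\cup\lambda$), the spherical eigenvalue of $S$ is the affine transform $\phi_\lambda=a^{-1}\bigl(\chi_{\lambda\cup\lambda}(g_\mu)/d_{\lambda\cup\lambda}-b\bigr)$. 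This is the key identity that makes everything explicit, and without it the ``identification'' step remains a black box.
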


\begin{theorem}
\label{thm:lower bound}
There exists constants $A,B>0$ such that the lower bound
\begin{equation*}
\|P^{\ast (n-c)}\ast D-U\|\geq 1-Ae^{-Bc}
\end{equation*}
holds for all $c\geq 0$ and for all sufficiently large $n$.
\end{theorem}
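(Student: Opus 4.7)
The plan is to find an event $S$ of uniform probability at most $Ae^{-Bc}$ that contains the support of $P^{\ast(n-c)}\ast D$; then
\[
\|P^{\ast(n-c)}\ast D - U\| \;\geq\; (P^{\ast(n-c)}\ast D)(S) - U(S) \;=\; 1 - U(S),
\]
giving the theorem. A natural candidate is the set of symplectic forms close to $\omega$ in the sense that $\omega'-\omega$ has small rank as an alternating bilinear form:
\[
S \;=\; \bigl\{\omega' \text{ symplectic} : \operatorname{rank}(\omega'-\omega) \leq 2(n-c+1)\bigr\}.
\]

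To see the walk lies in $S$, I would argue by induction on the number of steps. Let $\omega_j$ denote the walk position after $j$ steps. Any $h \in Kg_\mu K$ factors as $h = mT$ with $m\in K$ and $T$ a transvection: writing $h = kg_\mu k'$ for $k,k'\in K$, set $T = k^{-1}g_\mu k$ and $m = kk'$. Under the $\GL_{2n}$-action on bilinear forms, $(g\cdot\alpha)(v,w) = \alpha(g^{-1}v, g^{-1}w)$, a direct computation with $T^{-1} = I - vf$ shows that $T\cdot\omega_{j-1} - \omega_{j-1}$ is alternating of rank at most $2$. Since $m\in K$ fixes $\omega$ and acts invertibly on the space of bilinear forms,
\[
\omega_j - \omega \;=\; m\cdot(\omega_{j-1}-\omega) + m\cdot(T\cdot\omega_{j-1}-\omega_{j-1}),
\]
so $\operatorname{rank}(\omega_j-\omega) \leq \operatorname{rank}(\omega_{j-1}-\omega) + 2$. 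After $n-c$ steps the rank is at most $2(n-c)$, and convolving with $D$ (whose support consists of rank-$1$ perturbations of $I$) adds at most $2$ more, placing the walk inside $S$.

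For the counting, the number of alternating forms on $\mathbf{F}_q^{2n}$ of rank exactly $2r$ is $\binom{2n}{2r}_q \cdot |\GL_{2r}(\mathbf{F}_q)|/|\Sp_{2r}(\mathbf{F}_q)|$, obtained by choosing the $(2n-2r)$-dimensional radical and then a non-degenerate alternating form on the $2r$-dimensional quotient. This count is of order $q^{4nr - 2r^2 - r}$, which is increasing in $r$ for $r<n$. Summing over $r \leq n-c+1$ and dividing by the total number of symplectic forms $|\GL_{2n}(\mathbf{F}_q)|/|\Sp_{2n}(\mathbf{F}_q)| = \Theta(q^{2n^2 - n})$ yields
\[
U(S) \;\leq\; A' q^{-2c^2 + O(c)} \;\leq\; A e^{-Bc}
\]
for appropriate constants, completing the argument.

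The main obstacle is the rank induction: the interleaved $K$-rotations obscure the direct "product of transvections" structure of the walk, but the key observation is that the $K$-action on forms fixes $\omega$ and is rank-preserving, so the rank of the discrepancy $\omega_j - \omega$ grows by at most $2$ per step regardless. The counting itself is standard, and restricting from low-rank alternating $N$ to those with $\omega + N$ non-degenerate only loses a bounded factor since the top-rank contribution $r = n-c+1$ dominates the geometric sum.
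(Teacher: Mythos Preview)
Your strategy is the same as the paper's: exhibit a set $S$ containing the support of $P^{\ast(n-c)}\ast D$ with exponentially small uniform mass. Your rank induction is correct (with one harmless slip: writing $h=kg_\mu k'$ you want $T=(k')^{-1}g_\mu k'$ and $m=kk'$, not $T=k^{-1}g_\mu k$). The identity $\omega_j-\omega=m\cdot(\omega_{j-1}-\omega)+m\cdot(T\cdot\omega_{j-1}-\omega_{j-1})$ does the job, and the $D$-step indeed contributes at most rank $2$.

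Where you differ from the paper is the counting. The paper observes that $\operatorname{rank}(g^TJg-J)\leq 2(n-c)$ is a double-coset condition, then invokes the Bannai--Kawanaka--Song formula $|Kg_\mu K|=|K|\,|C_\mu|_{q\mapsto q^2}$ to reduce to a question about fixed spaces of elements in $\GL_n(\mathbf{F}_q)$, and finally cites Hildebrand's estimate $\sum_{\dim\ker(M_\mu-I)\geq c}|C_\mu|\leq 4q^{-c}|\GL_n(\mathbf{F}_q)|$, obtaining $U(S)\leq Cq^{-c}$. Your direct enumeration of alternating forms of rank $2r$ as $\binom{2n}{2r}_q|\GL_{2r}|/|\Sp_{2r}|\sim q^{4nr-2r^2-r}$ is more elementary, self-contained, and in fact yields the sharper bound $U(S)\leq A'q^{-2c^2+O(c)}$. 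The paper's route has the virtue of making explicit the parallel with the $\GL_n$ transvection walk, while yours avoids both the double-coset machinery and the dependence on Hildebrand's unpublished input. (Your final remark about ``losing a bounded factor'' is unnecessary: for the upper bound on $|S|$ you may simply overcount by including all alternating $N$, non-degenerate $\omega+N$ or not.)
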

Together, these theorems establish cutoff for the Markov chain at $n$. These results are a $q$-analogue of a result for the random walk on matchings analyzed in \cite{DH02}. The relationship between the results in \cite{H92} and the results of this paper mirror the relationship between the random transposition walk in \cite{DS81} and the random walk on matchings in \cite{DH02}, which can be viewed as the Gelfand pair $S_{2n}/B_{n}$ ($B_{n}$ denotes the hyperoctahedral group).

Specifically, the random transposition walk of \cite{DS81}, generated by the uniform distribution on transpositions in $S_n$, is related through the representation theory of $S_n$ with Schur functions. The random walk on matchings studied in \cite{DH02} is related through the representation theory of the Gelfand pair $S_{2n}/B_n$ with Jack polynomials of parameter $2$, and the eigenvalues of the two walks are related. The mixing times of the two walks are both at $1/2n\log n$.

The random transvection walk studied in \cite{H92} is naturally seen as a $q$-deformation of the random transposition walk on $S_n$, as evidenced by the similarity of the formulas for the eigenvalues of the random walk. However, the representation theory of $\GL_n(\mathbf{F}_q)$ is still connected with Schur functions. The random walk studied in this paper is related through the representation theory of the Gelfand pair $\GL_{2n}(\mathbf{F}_q)/\Sp_{2n}(\mathbf{F}_q)$ with Macdonald polynomials of parameter $(q,q^2)$, as explained in \cite{H19}. The Macdonald polynomials are a two parameter deformation of Schur functions and as $q\to 1$, the Macdonald polynomials of parameter $(q,q^\alpha)$ become the Jack polynomials of parameter $\alpha$. The eigenvalues of the walk are also related to the eigenvalues of the random transvection walk in \cite{H92}, mirroring what happens in \cite{DH02} (compare Proposition \ref{prop: transvec to non-symp transvec} with Proposition 1 of \cite{DH02}). Finally, the mixing time for the walk occurs at $n$, the same as for the random transvection walk, again mirroring what occurs for matchings.

The random walk on matchings in \cite{DH02} had various manifestations, including a random walk on phylogenetic trees and on partitions. It is hoped that the walk analyzed in this paper can find similar applications.

The paper is organized as follows. In Section \ref{section: preliminaries}, the notation used in the paper and the necessary background material on Markov chains and Gelfand pairs is reviewed. In Section \ref{sec:computation of spherical function values}, the eigenvalues of the random walk are computed. In Sections \ref{sec: upper bound} and \ref{sec: lower bound}, the upper and lower bounds are established.

\section{Preliminaries}
\label{section: preliminaries}

\subsection{Representation theory of \texorpdfstring{$\GL_n(\mathbf{F}_q)$}{GLn(Fq)}}
To fix notation, the representation theory of $\GL_n(\mathbf{F}_q)$ is briefly reviewed. The representation theory of $\GL_n(\mathbf{F}_q)$ was developed by Green in \cite{G55} but this section follows Macdonald \cite{M79} and his conventions are used. Let $M$ denote the group of units of $\overline{\mathbf{F}}_q$ and let $M_n$ denote the fixed points of $F^n$, where $F$ the Frobenius endomorphism $F(x)=x^q$. Let $L$ be the character group of the inverse limit of the $M_n$ with norm maps between them. Note that $M_n$ can be identified with $\mathbf{F}_{q^n}^*$. The Frobenius endomorphism $F$ acts on $L$ in a natural manner, and there is a natural pairing of $L_n$ with $M_n$ for each $n$.

The $F$-orbits of $M$ can be viewed as irreducible polynomials over $\mathbf{F}_q$ under $O\mapsto \prod_{\alpha\in O}(x-\alpha)$. Denote by $O(M)$ and $O(L)$ the $F$-orbits in $M$ and $L$ respectively. Use $\mathcal{P}$ to denote the set of partitions. Then the conjugacy classes of $\GL_n(\mathbf{F}_q)$ are indexed by partition-valued functions $\mu:O(M)\rightarrow \mathcal{P}$ such that 
\begin{align*}
\|\mu\|=\sum _{f\in O(M)}d(f)|\mu(f)|=n,
\end{align*}
where $d(f)$ denotes the degree of $f$. This is because $\mu$ contains the information necessary to construct the Jordan canonical form. That is, given $\mu$, construct a matrix in $\GL_{n}(\overline{\mathbf{F}_q})$ in Jordan form by taking for each orbit $f\in O(M)$, $l(\mu(f))$ blocks, of sizes $\mu(f)_i$, for each root of $f$. The resulting matrix has $d(f)$ blocks of size $\mu(f)_i$ for each $f$ and $i$, and adding this all up gives $\|\mu\|=n$.

For example, the partition-valued function corresponding to the set of transvections, which have Jordan form
\begin{align*}
\left(\begin{array}{ccccc}
1&1&0&\dots&0\\
0&1&0&\dots&0\\
0&0&1&\dots&0\\
\vdots&\vdots&\vdots&\ddots&\vdots\\
0&0&0&\dots&1
\end{array}\right),
\end{align*}
correspond to the partition-valued function $\mu$ with $\mu(f_1)=(21^{n-2})$ and $\mu(f)=0$ for $f\neq f_1$ (here $f_1$ denotes the minimal polynomial of $1$). Use $q_f$ to denote $q^{d(f)}$. There is a formula for the sizes of conjugacy classes given by
\begin{equation*}
|C_\mu|=\frac{|\GL_n(\mathbf{F}_q)|}{a_\mu(q)},
\end{equation*}
where
\begin{equation*}
a_\mu(q)=q^n\prod _{f\in O(M)}q_f^{2n(\mu(f))}\prod_{i\geq 1}\prod _{j=1}^{m_i(\mu(f))}(1-q_f^{-j}),
\end{equation*}
with $n(\lambda)=\sum (i-1)\lambda_i$ and $m_i(\lambda)$ denoting the number of $i$'s occurring in $\lambda$.

Similarly, the irreducible characters of $\GL_n(\mathbf{F}_q)$ are indexed by functions $\lambda:O(L)\rightarrow \mathcal{P}$ such that
\begin{equation*}
\|\lambda\|=\sum _{\varphi\in O(L)}d(\varphi)|\lambda(\varphi)|,
\end{equation*}
where $d(\varphi)$ denotes the size of the orbit $\alpha$. The dimension of the irreducible representation corresponding to $\lambda$ is given by
\begin{equation*}
d_{\lambda}=\psi_n(q)\prod _{\varphi\in O(L)}q_\varphi^{n(\lambda(\varphi)')}H_{\lambda(\varphi)}(q_\varphi)^{-1},
\end{equation*}
where $\psi_n(q)=\prod _{i=1}^n(q^i-1)$, $q_\varphi=q^{d(\varphi)}$ and $H_\lambda(t)=\prod _{x\in \lambda}(t^{h(x)}-1)$, $h(x)$ denoting the hook length. Note that with this convention, the trivial representation corresponds to the partition-valued function $\lambda(\chi_{1})=(1^n)$ ($\chi_1$ being the trivial character) and $0$ otherwise.

\subsection{Gelfand pairs}
A (finite) \emph{Gelfand pair} is a finite group $G$, with a subgroup $K\subseteq G$ such that inducing the trivial representation from $K$ to $G$ gives a multiplicity-free representation (or equivalently by Frobenius reciprocity, the restriction of any irreducible representation from $G$ to $K$ has at most a $1$-dimensional $K$-fixed subspace). This property means the harmonic analysis on $G/K$ is simplified, allowing the random walk to be analyzed. See \cite{D88} or \cite{CST08} for basic facts about Gelfand pairs and their application to Markov chains.

For a Gelfand pair $G/K$, any representation $\rho$ of $G$ has either no non-zero $K$-fixed vectors, or a $1$-dimensional subspace fixed pointwise by $K$. Say that $\rho$ is a \emph{spherical representation} if it has a $K$-fixed vector, and define the corresponding \emph{spherical function} to be $\phi(g)=\langle v_\rho,\rho(g)v_\rho\rangle$, where $v_\rho$ is a unit $K$-fixed vector. The spherical functions can also be computed by averaging characters over $K$. That is, $\phi(g)=|K|^{-1}\sum _{k\in K}\chi(kg)$ for $\chi$ the character of $\rho$ (if $\rho$ is not a spherical representation, then this average is $0$).

Let
\begin{equation*}
J=\left(\begin{array}{cc}
0&I\\
-I&0\\
\end{array}\right)
\end{equation*}
and define the standard symplectic form on $\mathbf{F}_q^{2n}$ to be $\omega(x,y)=x^TJy$. Then $\Sp_{2n}(\mathbf{F}_q)$ denotes the subgroup of $\GL_{2n}(\mathbf{F}_q)$ which preserves $\omega$, or in other words, for which $g^TJg=J$. To simplify the notation, write $\GL_n$ for $\GL_n(\mathbf{F}_q)$ and $\Sp_{2n}$ or $K$ for $\Sp_{2n}(\mathbf{F}_q)$.

The relevant Gelfand pair for this paper is $\GL_{2n}(\mathbf{F}_q)/\Sp_{2n}(\mathbf{F}_q)$, whose double cosets can be identified with the set of conjugacy classes in $\GL_n(\mathbf{F}_q)$ \cite{BKS90}.

Denote by $\phi_\lambda$ the spherical function corresponding to the partition-valued function $\lambda$ (see \cite{BKS90}, although note a different convention is used in this paper so all partitions labeling representations are transposed). For a partition $\lambda$, let $\lambda\cup\lambda$ denote the partition which contains every part of $\lambda$ twice. The double cosets of $\Sp_{2n}(\mathbf{F}_q)$ are indexed by $\mu:O(M)\rightarrow\mathcal{P}$, with $\|\mu\|=n$, with the matrices
\begin{equation*}
g_\mu=\left(\begin{array}{cc}
M_\mu &0\\
0& I\\
\end{array}\right)
\end{equation*}
being double coset representatives, where $M_\mu$ an $n\times n$ matrix in the conjugacy class of $\GL_n(\mathbf{F}_q)$ corresponding to $\mu$. 

Bannai, Kawanaka and Song worked out the representation theory of $\GL_{2n}(\mathbf{F}_q)/\Sp_{2n}(\mathbf{F}_q)$, including a formula for the spherical functions. However, the formula given is an alternating one unsuitable for asymptotic analysis. The formula obtained in this paper can be proven using the results in \cite{BKS90} with some work, see \cite{H19}, but a probabilistic proof is preferred to minimize the technical machinery needed.

The key result from \cite{BKS90} that is needed is reproduced below. It relates the sizes of double cosets in $\GL_{2n}(\mathbf{F}_q)/\Sp_{2n}(\mathbf{F}_q)$ to the sizes of conjugacy classes in $\GL_n(\mathbf{F}_q)$.

If $f(q)$ is a rational function in $q$, define $f(q)_{q\mapsto q^2}=f(q^2)$. For example,
\begin{equation*}
|\GL_n(\mathbf{F}_q)|_{q\mapsto q^2}=\prod_{i=0}^{n-1}(q^{2n}-q^{2i}).
\end{equation*}

\begin{proposition}[Bannai, Kawanaka, and Song, Proposition 2.3.6\cite{BKS90}]
\label{prop: double coset size}
Let $\mu:O(M)\rightarrow \mathcal{P}$ with $\|\mu\|=n$. Then
\begin{equation*}
|Kg_\mu K|=|K||C_\mu|_{q\mapsto q^2},
\end{equation*}
where $Kg_\mu K$ denotes the double coset indexed by $\mu$ in $\GL_{2n}(\mathbf{F}_q)$ and $C_\mu$ denotes the conjugacy class indexed by $\mu$ in $\GL_n(\mathbf{F}_q)$.
\end{proposition}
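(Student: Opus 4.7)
The plan is to reduce the double coset count to a stabilizer computation and then exploit the classification of self-adjoint operators on a symplectic space to extract the $q\mapsto q^2$ substitution. By orbit-stabilizer, $|Kg_\mu K| = |K|^2/|S_\mu|$, where $S_\mu := K \cap g_\mu K g_\mu^{-1}$. A direct comparison of orders shows $|K| = q^n |\GL_n(\mathbf{F}_q)|_{q\mapsto q^2}$, and since $|C_\mu|_{q\mapsto q^2} = |\GL_n(\mathbf{F}_q)|_{q\mapsto q^2}/a_\mu(q^2)$ by formal substitution in the formula for $|C_\mu|$, the proposition is equivalent to
\begin{equation*}
|S_\mu| = q^n\, a_\mu(q^2).
\end{equation*}

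To handle $S_\mu$, I would reinterpret it as the joint $\GL_{2n}(\mathbf{F}_q)$-stabilizer of two non-degenerate alternating forms, namely $J$ and $\omega_\mu := g_\mu J g_\mu^T$. A block computation gives
\begin{equation*}
\omega_\mu = \begin{pmatrix} 0 & M_\mu \\ -M_\mu^T & 0 \end{pmatrix}, \qquad T_\mu := \omega_\mu J^{-1} = \begin{pmatrix} M_\mu & 0 \\ 0 & M_\mu^T \end{pmatrix},
\end{equation*}
and one verifies both that $T_\mu$ is $J$-self-adjoint ($JT_\mu = T_\mu^T J$) and that an element $k\in K$ preserves $\omega_\mu$ iff it commutes with $T_\mu$. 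Consequently $S_\mu = K \cap Z_{\GL_{2n}(\mathbf{F}_q)}(T_\mu)$, i.e., the $\Sp_{2n}$-centralizer of a specific self-adjoint operator whose underlying $\GL_n$-Jordan class is exactly $\mu$.

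It remains to compute this centralizer. The natural route is to decompose $\mathbf{F}_q^{2n}$ as a $J$-orthogonal direct sum of primary $T_\mu$-invariant subspaces indexed by the orbits $f \in O(M)$ with $\mu(f) \neq 0$, and then compute the stabilizer on each block. On a primary block corresponding to $f$, the commutant of $T_\mu$ carries an involution induced by the $J$-adjoint, and the fixed subring is naturally a matrix algebra over $\mathbf{F}_{q^{2d(f)}}$; this is precisely the mechanism that produces the substitution $q\mapsto q^2$, since two dual Jordan blocks of a $J$-self-adjoint operator are paired into a single block with an $\mathbf{F}_{q^2}$-linear structure. Multiplying the resulting local orders over all $f$ and matching with the explicit formula for $a_\mu$ yields $|S_\mu| = q^n a_\mu(q^2)$.

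The main obstacle is this per-block centralizer computation: identifying the involution on each primary commutant and verifying that its fixed unit group has the predicted order, with correct powers of $q$ for the degree-$d(f)$ extension. This is a special case of Wall's classification of pairs (alternating form, invertible self-adjoint operator) over a finite field, and the cleanest route is to quote that classification and its associated centralizer formula rather than re-derive the block computation by hand.
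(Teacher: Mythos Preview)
The paper does not give its own proof of this proposition; it is quoted from \cite{BKS90} as background input, so there is nothing in the paper to compare your argument against.

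That said, your outline is sound and is essentially the standard route. The reductions are all correct: orbit--stabilizer gives $|Kg_\mu K|=|K|^2/|S_\mu|$ with $S_\mu=K\cap g_\mu Kg_\mu^{-1}$; the identity $|K|=q^n|\GL_n(\mathbf{F}_q)|_{q\mapsto q^2}$ is a direct order check; and your identification of $S_\mu$ with the $K$-centralizer of $T_\mu=\omega_\mu J^{-1}$ is right once one uses the $k\omega k^T=\omega$ form of both stabilizer conditions, since $kJk^T=J$ yields $k^T=J^{-1}k^{-1}J$ and substituting into $k\omega_\mu k^T=\omega_\mu$ gives $kT_\mu k^{-1}=T_\mu$. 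The remaining step---computing $|K\cap Z_{\GL_{2n}}(T_\mu)|$ by a $J$-orthogonal primary decomposition and extracting the factor $a_\mu(q^2)$---is the substantive part, and appealing to Wall's classification of pairs (alternating form, self-adjoint operator) is an entirely reasonable way to finish. This is in the same spirit as the argument in \cite{BKS90}; you are not missing any idea, only the bookkeeping on each primary block.
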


\subsection{Random walk on groups}
For any finite group $G$, a random walk can be created from a probability measure on $G$ given by multiplying random elements from this distribution. That is, if $P$ is a probability measure on $G$, then the random walk generated by $P$, starting from the identity, is the sequence of random variables $g_0, g_0g_1, g_0g_1g_2, \dotsc$, where $g_0=e$ and the $g_i$ are independent copies with distribution $P$.

The most well-studied class of random walks on groups are the ones generated by conjugacy-invariant measures, with the prototypical example being the random transposition walk on the symmetric group. An example that will be important in the eigenvalue computations is the random transvection walk studied by Hildebrand \cite{H92}.

The \emph{random transvection walk} is a walk on $\GL_{n}(\mathbf{F}_q)$ generated by the uniform measure on the conjugacy class of transvections. In the course of establishing cutoff for this random walk, Hildebrand found a formula for the eigenvalues of its transition matrix through the character theory of $\GL_{n}(\mathbf{F}_q)$.

Another class of interesting random walks on groups are bi-invariant walks. If $G$ is a finite group and $K$ a subgroup, then $P$ is a \emph{bi-invariant} measure on $G$ if $P(x)=P(kxk')$ for all $x\in G$ and $k,k'\in K$. This walk naturally descends to the quotient $G/K$ in two distinct ways. Namely, there is the more obvious left walk given by the sequence $g_0 K,g_1g_0K,\dotsc$ in $G/K$, but there is also a right walk given by $g_0K,g_0g_1K,\dotsc$ despite the fact that there is no right action by $G$. In many settings, the right walk is the more natural walk. 

It is important to note that although the two walks are distinct in general, if $P$ is invariant under inversion then at the group level the left and right walks are identical. The marginal distributions (of the current state) of both the left and right walks are identical even though the transition probabilities are very different, and so in particular the mixing times are identical.

Let $V$ be a vector space over $\mathbf{F}_q$ and let $\omega$ be a symplectic form on $V$ (that is, a non-degenerate alternating form). Note that any transvection in $\GL(V)$ can be written in the form $I+vf$ for $v\in V$ and $f\in V^*$ with $f(v)=0$. Since $\omega$ is a non-degenerate form, a transvection can always be written as $I+v\omega(w,\cdot)$. A \emph{symplectic transvection} is a transvection in $\Sp_{2n}$. These are of the form $I+\alpha v\omega(v,\cdot)$ for $\alpha\in \mathbf{F}_q$ and $v\in V$ (see \cite[1.4.12]{O78}).

The random walk generated by non-symplectic transvections is the random walk generated by the uniform measure on the set of non-symplectic transvections. This measure is bi-invariant under $\Sp_{2n}(\mathbf{F}_q)$. The random walk is most naturally desccribed using the right walk on the space $\GL_{2n}(\mathbf{F}_q)/\Sp_{2n}(\mathbf{F}_q)$, which can be viewed as the space of symplectic forms on $\mathbf{F}_q^{2n}$. The random walk moves from one symplectic form $\omega$ to another by picking a transvection $X$ uniformly from those not fixing $\omega$ (i.e. non-symplectic ones). The left walk can be described as picking a non-symplectic transvection with respect to a fixed symplectic form $\omega_0$, and acting on $\omega$. Thus, the right walk always moves at every step while the left walk can remain stationary.

Some initial randomness is needed as otherwise only matrices with determinant $1$ would be reached so the walk is started from $g_0$, the diagonal matrix with all $1$ except for one entry, which is uniformly chosen from $\mathbf{F}_q^*$. Of course, a walk on $\SL_{2n}(\mathbf{F}_q)/\Sp_{2n}(\mathbf{F}_q)$ with the same mixing properties could be analyzed instead, but because the representation theory of $\GL_{2n}(\mathbf{F}_q)$ is crucial, the walk on $\GL_{2n}(\mathbf{F}_q)$ is analyzed at the cost of some initial randomness. 

\begin{example}[Transition matrices for $\GL_4(\mathbf{F}_2)/\Sp_{4}(\mathbf{F}_2)$]
\label{ex: trans matrix}
This example gives the explicit transition matrices for $\GL_4(\mathbf{F}_2)/\Sp_{4}(\mathbf{F}_2)$. Note that $n=2$ is the first non-trivial case because $\Sp_{2}(\mathbf{F}_q)\cong \SL_2(\mathbf{F}_q)$ and so the walk is trivial. The computations are done on the double-coset space to minimize the number of states, but they can be done on the coset space or even the group as well.

Note that the double cosets are labeled by partition-valued functions such that $\|\mu\|=2$. This implies that $\mu$ is non-zero only at degree $1$ and $2$ polynomials. There is one degree one irreducible polynomial, $x+1$, and one irreducible degree two polynomial, $x^2+x+1$. A partition-valued function may send $x+1$ to a partition with two boxes, so there are two choices, and must send $x^2+x+1$ to one box, so there is one choice. Thus, there are $3$ double cosets. The following elements $g_i$,
\begin{align*}
g_1=\left(\begin{array}{cccc}
1&0&0&0\\
0&1&0&0\\
0&0&1&0\\
0&0&0&1\\
\end{array}\right),
\\g_2=\left(\begin{array}{cccc}
1&1&0&0\\
0&1&0&0\\
0&0&1&0\\
0&0&0&1\\
\end{array}\right),
\\g_3=\left(\begin{array}{cccc}
0&1&0&0\\
1&1&0&0\\
0&0&1&0\\
0&0&0&1\\
\end{array}\right),
\end{align*}
give representatives for the double cosets coming from the representatives for the conjugacy classes in $\GL_{2n}(\mathbf{F}_2)$. 

First note that since the field is $\mathbf{F}_2$, there is no need to randomize the determinant. The walk is generated by $g_2$, and so if the current position is at $g_1$, it will always move to $g_2$. For the other states, the GAP computer algebra system was used to compute the transition probabilities. To compute the probability of moving from $Kg_iK$ to $Kg_jK$, compute the proportion of elements in $Kg_2Kg_iK$ which lie in $Kg_jK$. This gives the following transition matrix:
\begin{equation*}
S=\left(\begin{array}{ccc}
0&1&0\\
\frac{1}{15}&\frac{6}{15}&\frac{8}{15}\\
0&\frac{2}{3}&\frac{1}{3}\\
\end{array}\right).
\end{equation*}
\end{example}

\subsection{Cutoff for Markov chains}
Given two measures $P,Q$ on a probability space $\Omega$, let the \emph{total variation distance} be defined by
\begin{equation*}
\|P-Q\|=\sup _{A}|P(A)-Q(A)|.
\end{equation*}

The cutoff phenomenon is a particularly sharp convergence of a Markov chain to its stationary distribution within a certain window. More precisely, define the mixing time of a Markov chain $P$ with stationary distribution $U$ by
\begin{equation*}
t_{mix}(\varepsilon)=\max_{x\in \Omega}\min \{t\in\mathbf{N}|\|P^t(x,\cdot)-U\|\leq \varepsilon\},
\end{equation*}
where $P^t(x,\cdot)$ denotes the measure given by taking $t$ steps in the Markov chain, starting at $x$. A family of Markov chains $P_n$ is said to have \emph{cutoff} if $t^{(n)}_{mix}(\varepsilon)/t^{(n)}_{mix}(1-\varepsilon)\rightarrow 1$ as $n\rightarrow \infty$ for all $\varepsilon\in (0,1)$. Note that in the setting of interest on a group, left-invariance implies that the mixing time is independent of the starting state. For a more comprehensive overview on cutoff for Markov chains, see \cite{LP17}.

\section{Computation of eigenvalues}
\label{sec:computation of spherical function values}
In this section, the eigenvalues of the transition matrix for the random walk which are needed to compute the upper bound on the total variation distance are computed. First, a connection is made between the walk on symplectic forms and the random transvection walk in \cite{H92}. This gives a way to compute the eigenvalues directly from the character values found in \cite{H92}, analogous to how the eigenvalues for a random walk on matchings can be found using the ones for random transposition \cite{DH02}. First, the claim made when defining the random walk that the non-symplectic transvections form a single double coset is shown.

\begin{lemma}
The non-symplectic transvections form a single double coset $Kg_\mu K$.
\end{lemma}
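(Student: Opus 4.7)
The plan is to show that $\Sp_{2n}$ acts transitively by conjugation on the set of non-symplectic transvections, which immediately implies they form a single double coset (take $k' = k^{-1}$).

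First I would parameterize: every transvection in $\GL_{2n}(\mathbf{F}_q)$ can be written $T_{v,w} = I + v\omega(w, \cdot)$ for some $v, w \in V$ with $\omega(w,v) = 0$ (using non-degeneracy of $\omega$ to identify $V^*$ with $V$). By the characterization of symplectic transvections recalled from \cite{O78}, $T_{v,w}$ is symplectic if and only if $w$ is a scalar multiple of $v$. Hence the non-symplectic transvections are exactly $T_{v,w}$ with $v, w$ linearly independent and $\omega(v,w) = 0$, i.e., the pair $(v,w)$ spans a totally isotropic $2$-plane.

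Next I would compute the effect of $K$-conjugation. For $k \in K$,
\begin{equation*}
k T_{v,w} k^{-1} = I + (kv)\omega(w, k^{-1}\cdot) = I + (kv)\omega(kw, \cdot) = T_{kv,\, kw},
\end{equation*}
where I used $\omega(kw, kx) = \omega(w,x)$. So conjugation by $K$ sends $T_{v,w}$ to $T_{kv, kw}$, and the problem reduces to showing that $K$ acts transitively on ordered pairs $(v,w)$ of linearly independent vectors with $\omega(v,w) = 0$.

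This transitivity is exactly Witt's extension theorem for the symplectic group: any two totally isotropic $2$-planes are isometric (their restricted forms are both zero), and any linear isomorphism between them extends to an element of $\Sp_{2n}(\mathbf{F}_q)$. Applying this with the isomorphism sending $(v_1, w_1)$ to $(v_2, w_2)$ yields the required $k \in K$.

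Combining the steps, given any two non-symplectic transvections $T_1, T_2$, there is $k \in K$ with $k T_1 k^{-1} = T_2$, so $T_2 \in K T_1 K$, and therefore all non-symplectic transvections lie in one double coset $Kg_\mu K$. I do not anticipate a real obstacle; the only subtlety is invoking Witt's theorem with the correct hypotheses (checking that both $2$-planes are totally isotropic so that the zero restricted form on one extends to the zero restricted form on the other), and correctly eliminating the symplectic transvections (the linearly dependent case) at the start.
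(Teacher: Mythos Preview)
Your proof is correct and follows essentially the same approach as the paper: parameterize non-symplectic transvections as $I+v\omega(w,\cdot)$ with $v,w$ spanning an isotropic $2$-plane, then use the symplectic group to move any such pair to a fixed one. The paper phrases the key step as ``extend $v,w$ to a symplectic basis'' and conjugates explicitly to the representative $g_\mu$, while you invoke Witt's extension theorem by name and phrase it as transitivity on ordered isotropic pairs; these are the same argument.
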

\begin{proof}
Notice that any transvection which is not symplectic is of the form $I+v\omega(w,\cdot)$ for $v,w$ linearly independent and $\omega(w,v)=0$. Then for any map $L$ such that $Le_1=v$ and $Le_{n+2}=w$ and which is symplectic,
\begin{equation*}
\begin{split}
L^{-1}(I+v\omega(w,\cdot))L&=I+L^{-1}vw^TJL
\\&=I+L^{-1}vw^T (L^{-1})^TJ
\\&=I+e_1e_{n+2}^TJ
\\&=g_\mu.
\end{split}
\end{equation*}
But such $L$ can always be found because $\omega(v,w)=0$, and so $v,w$ can be extended to a symplectic basis. Thus, any non-symplectic transvection lies in $Kg_\mu K$.
\end{proof}

\begin{proposition}
\label{prop: transvec to non-symp transvec}
Let $T$ denote the transition matrix for the walk on $\GL_{2n}(\mathbf{F}_q)/\Sp_{2n}(\mathbf{F}_q)$ induced by the random transvection walk on $\GL_{2n}(\mathbf{F}_q)$ and $S$ denote the transition matrix for the random non-symplectic transvection walk on $\GL_{2n}(\mathbf{F}_q)/\Sp_{2n}(\mathbf{F}_q)$. Then $T=aS+bI$, where $a$ is the proportion of transvections which are non-symplectic and $b$ denotes the proportion which are symplectic, and
\begin{equation*}
a=\frac{q(q^{2n-2}-1)}{q^{2n-1}-1}
\end{equation*}
and
\begin{equation*}
b=\frac{q-1}{q^{2n-1}-1}.
\end{equation*}
\end{proposition}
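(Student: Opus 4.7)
The strategy is to exhibit the random transvection walk, pushed to $\GL_{2n}/K$ by right multiplication on cosets, as a convex combination of staying put and moving by a non-symplectic transvection. Split the transvection conjugacy class $\mathcal{T}\subseteq \GL_{2n}$ into the symplectic transvections $\mathcal{T}\cap K$ and the non-symplectic ones $\mathcal{T}\setminus K$. Right multiplication by any element of $K$ fixes every coset, so the contribution of $\mathcal{T}\cap K$ to $T$ is $bI$, where $b=|\mathcal{T}\cap K|/|\mathcal{T}|$. Conditional on being non-symplectic, a uniform transvection has the uniform distribution on $\mathcal{T}\setminus K$, and by the preceding lemma this is the uniform distribution on the single double coset $Kg_\mu K$, which is exactly the measure generating $S$. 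Hence $T=aS+bI$ with $a=1-b$, and the only remaining task is to compute the two ratios.

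For the counts, the plan is to parametrize both families and track the scaling redundancies. A general transvection can be written as $I+vf$ for nonzero $v\in V$ and nonzero $f\in V^*$ with $f(v)=0$, subject to the equivalence $(v,f)\sim(cv,c^{-1}f)$ for $c\in\mathbf{F}_q^*$; this yields
\[
|\mathcal{T}|=\frac{(q^{2n}-1)(q^{2n-1}-1)}{q-1}.
\]
A symplectic transvection has the more restrictive form $I+\alpha v\,\omega(v,\cdot)$ with $v\neq 0$ and $\alpha\in\mathbf{F}_q^*$, and the nondegeneracy of $\omega$ turns the equivalence of parametrizations into $(v,\alpha)\sim(cv,c^{-2}\alpha)$, which still has orbits of size $q-1$, giving $|\mathcal{T}\cap K|=q^{2n}-1$. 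Dividing produces $b=(q-1)/(q^{2n-1}-1)$, and a short simplification of $1-b$ gives the stated value of $a$.

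The only real subtlety is identifying the scaling equivalences correctly, in particular the quadratic $c^{-2}$ in the symplectic parametrization (arising because the map $v\mapsto v\otimes\omega(v,\cdot)$ is quadratic in $v$) versus the linear $c^{-1}$ in the general case; confusing these would throw off the ratio by a factor of $q-1$. Beyond this, no input is needed that is not already in the preceding lemma and the explicit form of symplectic transvections recalled in the preliminaries.
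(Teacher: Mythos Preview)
Your proposal is correct and follows essentially the same approach as the paper. The only cosmetic difference is that the paper obtains $|\mathcal{T}|$ from the conjugacy-class size formula $|\GL_{2n}(\mathbf{F}_q)|/a_\mu(q)$, whereas you use the direct $(v,f)$ parametrization; the symplectic count and the decomposition $T=aS+bI$ are handled identically in both.
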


\begin{proof}
f a random transvection is picked uniformly, one in $\Sp_{2n}(\mathbf{F}_q)$ is picked with probability $b$, and if not, then it must lie in $Kg_\mu K$ and this happens with probability $a$. Thus, $T=aS+bI$.

The number of transvections in $\GL_{2n}(\mathbf{F}_q)$ is 
\begin{equation*}
\begin{split}
\frac{|\GL_{2n}(\mathbf{F}_q)|}{a_\mu(q)}&=\frac{q^{n(2n-1)}\prod (q^i-1)}{q^{2n+(2n-1)(2n-2)}(1-q^{-1})\prod_{i=1}^{2n-2} (1-q^{-i})}
\\&=\frac{(q^{2n}-1)(q^{2n-1}-1)}{q-1}.
\end{split}
\end{equation*}
The number of symplectic transvections (transvections in $\Sp_{2n}(\mathbf{F}_q)$) is $(q^{2n}-1)$. To see this, write it as $I+\alpha v\omega(v,\cdot)$ and note that $(\alpha,v)$ and $(\alpha/\beta^2,\beta v)$ give the same map (note if $v$ and $w$ are linearly independent, then $(\alpha,v)$ and $(\beta,w)$ must give different maps because they have different kernels). This gives the stated proportions.
\end{proof}

For a partition $\lambda$, and a box $s\in \lambda$, define the \emph{arm length} $a_\lambda(s)$ to be the number of boxes to the right of the box $s$ in $\lambda$, and similarly define the \emph{leg length}  $l_\lambda(s)$ to be the number of boxes below $s$ (when the partition is clear, $\lambda$ will be omitted). Now let
\begin{equation*}
\begin{split}
c_\lambda(q,t)&=\prod _{s\in \lambda}(1-q^{a(s)}t^{l(s)+1}),
\\c'_\lambda(q,t)&=\prod _{s\in \lambda}(1-q^{a(s)+1}t^{l(s)}).
\end{split}
\end{equation*}
Also, for partitions $\mu\subseteq \lambda$, define
\begin{equation*}
\psi'_{\lambda/\mu}=\prod _{s\in C_{\lambda/\mu}\setminus R_{\lambda/\mu}}\frac{b_\lambda(s;q,t)}{b_\mu(s;q,t)},
\end{equation*}
where
\begin{equation*}
b_\lambda(s;q,t)=\frac{1-q^{a(s)}t^{l(s)+1}}{1-q^{a(s)+1}t^{l(s)}},
\end{equation*}
and where $C_{\lambda/\mu}$ denotes the columns of $\lambda$ intersecting $\lambda/\mu$ and similarly $R_{\lambda/\mu}$ but for rows. This notation comes from the theory of Macdonald polynomials, although nothing more than these definitions will be needed.

The following formula for the eigenvalues of the random walk may now be proven. This also gives a combinatorial formula for the spherical functions evaluated on a non-symplectic transvection.
\begin{proposition}
\label{prop: spherical function value}
The eigenvalues of $S$, the transition matrix for the random walk on symplectic forms, are indexed by $\lambda:O(L)\to \mathcal{P}$ such that $\|\lambda\|=n$, with
\begin{equation}
    \label{eq:spherical function value}
    \phi_\lambda=\frac{q^{2n-2}(q^2-1)}{(q^{2n}-1)(q^{2n-2}-1)}\left(\sum _{\lambda_1}\frac{c'_{\lambda}(q,q^2)\psi_{\lambda/\lambda_1}'}{c'_{\lambda_1}(q,q^2)(1-q)}q^{n(\lambda_1')-n(\lambda')}-\frac{q^{2n}-1}{q^{2n-2}(q^2-1)}\right),
\end{equation}
where the sum is over $\lambda_1\subseteq \lambda$ obtained by removing a single box from some $\lambda(\varphi)$ with $d(\varphi)=1$. The eigenvalue $\phi_lambda$ has multiplicity $d_{\lambda\cup\lambda}$.
\end{proposition}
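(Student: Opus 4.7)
The plan is to combine the Gelfand pair machinery from Section \ref{section: preliminaries} with the reduction in Proposition \ref{prop: transvec to non-symp transvec} and Hildebrand's character formula from \cite{H92}. Since $S$ is bi-$K$-invariant and supported on the single double coset $Kg_\mu K$, standard Gelfand pair theory identifies its eigenvalues on $L^2(\GL_{2n}/\Sp_{2n})$ with the spherical function values $\phi_\lambda(g_\mu)$, indexed by the spherical representations of $\GL_{2n}(\mathbf{F}_q)$. By \cite{BKS90}, the spherical representations are exactly those labeled by partition-valued functions of the form $\lambda\cup\lambda$, parametrized by $\lambda:O(L)\to\mathcal{P}$ with $\|\lambda\|=n$; the $(\lambda\cup\lambda)$-isotypic component has dimension $d_{\lambda\cup\lambda}$ and $S$ acts there as a scalar, giving the stated multiplicity.

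Next, I would extract the relevant character value from the full transvection walk. On any irreducible $\mu$-isotypic component of $L^2(\GL_{2n})$, the transition operator of the random transvection walk acts as the scalar $\chi^\mu(t)/d_\mu$, where $t$ is any transvection. Restricting to the spherical side, this is the $\phi_\lambda$-eigenvalue of $T$. The relation $T = aS + bI$ from Proposition \ref{prop: transvec to non-symp transvec} then immediately gives
\begin{equation*}
\phi_\lambda(g_\mu) = \frac{1}{a}\left(\frac{\chi^{\lambda\cup\lambda}(t)}{d_{\lambda\cup\lambda}} - b\right),
\end{equation*}
with the explicit values of $a$ and $b$ producing the prefactor $\tfrac{q^{2n-2}(q^2-1)}{(q^{2n}-1)(q^{2n-2}-1)}$ and the subtracted constant $\tfrac{q^{2n}-1}{q^{2n-2}(q^2-1)}$ in the stated formula. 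Hildebrand \cite{H92} provides $\chi^\mu(t)/d_\mu$ as an explicit sum over partition-valued functions $\mu_1\subseteq\mu$ obtained by removing one box from $\mu(\varphi)$ with $d(\varphi)=1$, built from ratios of hook-length products of $\mu$ and $\mu_1$.

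The main obstacle is the combinatorial simplification of Hildebrand's formula at $\mu=\lambda\cup\lambda$. A box can only be removed from $\lambda\cup\lambda$ when the two copies of its row in $\lambda\cup\lambda$ have equal length, so the sum over removable boxes of $\lambda\cup\lambda$ collapses to a sum over removable boxes of $\lambda$. The corresponding hook-length products should factor over the paired rows $2i-1$ and $2i$, with every factor $(q^{j}-1)$ associated to a column of $\lambda$ combining with its partner to yield $(q^{2j}-1)$; this is the mechanism by which the Macdonald-theoretic quantities $c'_\lambda(q,q^2)$ and $\psi'_{\lambda/\lambda_1}$ at the $(q,q^2)$ specialization emerge, matching the $q$-deformation of Jack polynomials at parameter $2$ discussed in the introduction. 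Verifying this doubling identity term by term is the technical heart of the argument; once in hand, substituting the formulas for $a$ and $b$ and grouping the constant $-b/a$ with the sum cleans everything up into the displayed expression.
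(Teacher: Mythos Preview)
Your proposal is correct and follows essentially the same route as the paper: identify the spherical representations via \cite{BKS90}, use $T=aS+bI$ to pass from Hildebrand's character ratio on $\GL_{2n}$ at $\lambda\cup\lambda$ to the eigenvalues of $S$, and then simplify the sum over removable boxes of $\lambda\cup\lambda$ to one over removable boxes of $\lambda$. One small correction to your sketch of the combinatorial mechanism: the passage from $\lambda\cup\lambda$ to the $(q,q^2)$ Macdonald quantities is not a pairing of column factors $(q^j-1)\mapsto(q^{2j}-1)$, but rather that hooks in $\lambda\cup\lambda$ have the form $a_\lambda(s)+2l_\lambda(s)+1$ or $a_\lambda(s)+2l_\lambda(s)+2$ (the leg length doubles, with a parity shift), which is exactly what produces $c'_\lambda(q,q^2)$ and $\psi'_{\lambda/\lambda_1}$.
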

\begin{proof}
First, note that the group $\GL_{2n}(\mathbf{F}_q)$ acts transitively on the space of symplectic forms, and there is a multiplicity-free decomposition
\begin{equation*}
    \mathbf{C}(\GL_{2n}(\mathbf{F}_q)/\Sp_{2n}(\mathbf{F}_q))\cong \bigoplus_{\|\lambda\|=n}V_{\lambda\cup \lambda},
\end{equation*}
where $V_{\lambda\cup\lambda}$ is the irreducible representation of $\GL_{2n}(\mathbf{F}_q)$ indexed by $\lambda\cup\lambda$ \cite[Theorem 4.1.1]{BKS90}. This implies that the random walk given by random transvection on the space of symplectic forms has eigenvalues $\chi_{\lambda\cup\lambda}/d_{\lambda\cup\lambda}$ with multiplicity $d_{\lambda\cup\lambda}$. Then by Proposition \ref{prop: transvec to non-symp transvec}, the eigenvalues of $S$ are given by
\begin{equation*}
    \frac{1}{a}\frac{\chi_{\lambda\cup\lambda}(g_\mu)}{d_{\lambda\cup\lambda}}-\frac{b}{a}
\end{equation*}
with multiplicity $d_{\lambda\cup\lambda}$. 

The corresponding character ratio value at a transvection for $\GL_n(\mathbf{F}_q)$, which can be found in \cite{H92}, is given by
\begin{equation*}
\frac{\chi_{\lambda}(g_\mu)}{d_\lambda}=\frac{q^{2n-1}(q-1)}{(q^{2n}-1)(q^{2n-1}-1)}\left(\sum _{\lambda_1}\frac{\delta(S_{\lambda_1})}{(q-1)\delta(S_\lambda)}-\frac{q^{2n}-1}{q^{2n-1}(q-1)}\right),
\end{equation*}
where $\delta(S_\lambda)=\prod _{\varphi\in O(L)}q_\varphi^{n(\lambda(\varphi)')}H_{\lambda(\varphi)}(q_\varphi)^{-1}$. Then compute
\begin{equation*}
\begin{split}
&\frac{1}{a}\frac{\chi_{\lambda\cup\lambda}(g_\mu)}{d_{\lambda\cup\lambda}}-\frac{b}{a}
\\=&\frac{q^{2n-2}(q^2-1)}{(q^{2n}-1)(q^{2n-2}-1)}\left(\sum _{\lambda_1}\frac{\delta(S_{\lambda_1})}{(q^2-1)\delta(S_{\lambda\cup\lambda})}-\frac{q^{2n}-1}{q^{2n-2}(q^2-1)}\right).
\end{split}
\end{equation*}

Now there is a bijection between boxes which can be removed from $\lambda$ and boxes which can be removed from $\lambda\cup\lambda$ (the first copy of any part can never have a box removed), and so the sum can be rewritten from being over $\lambda_1$ having one box removed from $\lambda\cup\lambda$, to summing over $\lambda_1$ having one box removed from $\lambda$ (see Figure \ref{fig:correspondence between removable boxes} for an example).

\begin{figure}
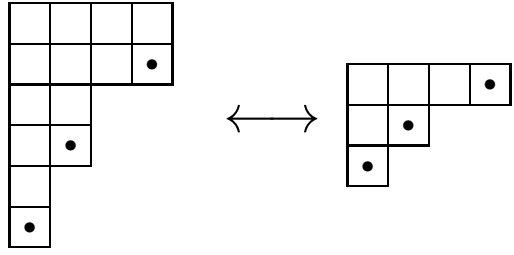

\centering
\begin{tabular}{ccc}
\parbox[c]{2.5cm}{\ydiagram{4,4,2,2,1,1}*[\bullet]{0,3+1,0,1+1,0,1}}&\scalebox{2}{$\longleftrightarrow$}&\parbox[c]{2.5cm}{\ydiagram{4,2,1}*[\bullet]{3+1,1+1,1}}
\end{tabular}
\caption{Correspondence between removable boxes in $\lambda\cup\lambda$ and $\lambda$}
\label{fig:correspondence between removable boxes}
\end{figure}

Then note that the hook length polynomial can be broken up into two factors, coming from even and odd parts, and this corresponds to the two cases $a_\lambda(s)+2l_\lambda(s)+1$ and $a_\lambda(s)+2l_\lambda(s)+2$ (the $2$ in front of the $l_\lambda$ corresponds to doubling the number of boxes in a column, the $1$ or $2$ at the end corresponds to the even and odd boxes).

That is, fix a removable box $s\in \lambda(\varphi)$, which corresponds to a removable box $s\in \lambda\cup\lambda(\varphi)$, and then write (noting that in the ratio, the only factors which matter lie in either the column or row of the removed box)
\begin{equation*}
\begin{split}
&\frac{\delta(S_{\lambda_1})}{(q^2-1)\delta(S_{\lambda\cup\lambda})}
\\=&\frac{q^{n(\lambda_1(\varphi)')}}{q^{n((\lambda(\varphi)\cup\lambda(\varphi))')}(1-q)(q^2-1)}
\\&\qquad\times\prod _{s\in C\setminus R}\frac{1-q^{a_{\lambda\cup\lambda(\varphi)}(s)+l_{\lambda\cup\lambda}(s)+1}}{1-q^{a_{\lambda\cup\lambda}(s)+l_{\lambda\cup\lambda}(s)}}\prod _{s\in R\setminus C}\frac{1-q^{a_{\lambda\cup\lambda}(s)+l_{\lambda\cup\lambda}(s)+1}}{1-q^{a_{\lambda\cup\lambda}(s)+l_{\lambda\cup\lambda}(s)}}.
\end{split}
\end{equation*}
Here $C$ and $R$ denote the column and row in $\lambda\cup\lambda$. By the discussion above there is a correspondence between removed boxes in $\lambda$ and $\lambda\cup\lambda$. Then compute all expressions involving $\lambda\cup\lambda$ in terms of $\lambda$, writing $\lambda_1=\lambda\cup \lambda_2$, giving
\begin{equation*}
\begin{split}
&\frac{q^{n(\lambda_1')}}{q^{n((\lambda\cup\lambda)')}(1-q)}\prod _{s\in C\setminus R}\frac{(1-q^{a_\lambda(s)+2l_\lambda(s)+2})(1-q^{a_\lambda(s)+2l_\lambda(s)+1})}{(1-q^{a_\lambda(s)+2l_\lambda(s)+1})(1-q^{a_\lambda(s)+2l_\lambda(s)})}
\\&\qquad\qquad\times\prod _{s\in R\setminus C}\frac{1-q^{a_\lambda(s)+la_\lambda(s)+1}}{1-q^{a_\lambda(s)+2l_\lambda(s)}}
\\=&\frac{q^{n(\lambda_2')}}{q^{n(\lambda')}(1-q)}\prod _{s\in C\setminus R}\frac{(1-q^{a_\lambda(s)+2l_\lambda(s)+2})}{(1-q^{a_\lambda(s)+2l_\lambda(s)})}\prod _{s\in R\setminus C}\frac{1-q^{a_\lambda(s)+2l_\lambda(s)+1}}{1-q^{a_\lambda(s)+2l_\lambda(s)}}
\end{split}
\end{equation*}
with $q^{n(\lambda_1')-n((\lambda\cup\lambda)')}=q^{n(\lambda_2')-n(\lambda')}$ because $n(\lambda)$ can be computed by placing $i-1$ in the $i$th row of $\lambda$ and summing all these values and so it's clear that the extra boxes do not matter. Then
\begin{equation}
\begin{split}
    \label{eq: first term}
    &\frac{\delta(S_{\lambda_1})}{(q^2-1)\delta(S_{\lambda\cup\lambda})}
    \\=&\frac{q^{n(\lambda_2')}}{q^{n(\lambda')}(1-q)}\prod _{s\in C\setminus R}\frac{(1-q^{a_\lambda(s)+2l_\lambda(s)+2})}{(1-q^{a_\lambda(s)+2l_\lambda(s)})}\prod _{s\in R\setminus C}\frac{1-q^{a_\lambda(s)+2l_\lambda(s)+1}}{1-q^{a_\lambda(s)+2l_\lambda(s)}}.
\end{split}
\end{equation}

Now compute
\begin{equation}
\label{eq:columns-and-rows}
\begin{split}
&\frac{c'_{\lambda}(q,q^2)}{c'_{\lambda_2}(q,q^2)(1-q)}\psi'_{\lambda/\lambda_2}
\\=&(q-1)^{-1}\prod_{s\in C\cup R} \frac{1-q^{a_{\lambda}(s)+2l_{\lambda}(s)+1}}{1-q^{a_{\lambda_2}(s)+2l_{\lambda_2}(s)+1}}\prod _{s\in C\setminus R}\frac{1-q^{a_{\lambda}(s)+2l_{\lambda}(s)+2}}{1-q^{a_{\lambda_2}(s)+2l_{\lambda_2}(s)+2}}\frac{1-q^{a_{\lambda_2}(s)+2l_{\lambda_2}(s)+1}}{1-q^{a_{\lambda}(s)+2l_{\lambda}(s)+1}}
\\=&\prod _{C\setminus R}\frac{1-q^{a_{\lambda}(s)+2l_{\lambda}(s)+2}}{1-q^{a_{\lambda_2}(s)+2l_{\lambda_2}(s)+2}}\prod _{s\in R\setminus C}\frac{1-q^{a_{\lambda}(s)+2l_{\lambda}(s)+1}}{1-q^{a_{\lambda_2}(s)+2l_{\lambda_2}(s)+1}},
\end{split}
\end{equation}
where $C$ and $R$ denote the column and row that the removed box is contained in.

Note here $a_\lambda(s)$ denotes the arm length of $s$ in $\lambda(\varphi)$, and similarly for $l_\lambda(s)$ (strictly speaking, $\lambda$ is not a partition but a function into the set of partitions, but because only one box is removed, only one partition is relevant to the ratio).

Finally, from \eqref{eq:columns-and-rows} and \eqref{eq: first term} the result follows.
\end{proof}

\begin{remark}
The combinatorial formula given by Proposition \ref{prop: transvec to non-symp transvec} was also obtained in \cite{H19} using the theory of Macdonald polynomials and some formulas for spherical function values due to Bannai, Kawanaka and Song \cite{BKS90}. Here, a combinatorial proof is preferred to reduce the necessary background and highlight the similarities with the Weyl group case studied in \cite{DH02} where a similar proof was found.
\end{remark}

\begin{example}[Eigenvalues for $\GL_4(\mathbf{F}_2)/\Sp_{4}(\mathbf{F}_2)$]
To make Proposition \ref{prop: spherical function value} more concrete, return to the setting of Example \ref{ex: trans matrix}. The eigenvalues for the transition matrix computed there are $1,-\frac{1}{3},\frac{1}{15}$, which match the values given by Equation \eqref{eq:spherical function value} as shown below.

There are three partition-valued functions $O(L)\rightarrow \mathcal{P}$ with $\|\lambda\|=2$, two supported at an orbit of degree $1$ and one at an orbit of degree $2$. If the eigenvalues are labeled $\phi_1,\phi_2,\phi_3$ (with $\phi_1$ being the trivial one), then
\begin{align*}
\phi_1&=1,
\\ \phi_2&=\frac{1}{15},
\\ \phi_3&=-\frac{1}{3}.
\end{align*}
\end{example}

\section{Upper Bounds}
\label{sec: upper bound}
The goal of this section is to establish Theorem \ref{thm:upper bound}. By the upper bound lemma (see \cite{DS87}, and also \cite{D88} or \cite{CST08}),
\begin{equation*}
\|P^{\ast k}\ast D-U\|^2\leq \frac{1}{4}\sum _{\rho}d_\rho \Tr\left(\widehat{D}(\rho)\widehat{D}(\rho)^*(\widehat{P}(\rho)^k)^*\widehat{P}(\rho)^k\right),
\end{equation*}
where for a probability measure $Q$, $\widehat{Q}(\rho)=\sum Q(g)\rho(g)$ is the Fourier transform (which turns convolution into a product). Let $h_\alpha$ denote the matrix $\mathrm{diag}(\alpha,1,\dotsc,1)$.  Then
\begin{equation*}
\begin{split}
&\Tr\left(\widehat{D}(\rho_\lambda)\widehat{D}(\rho_\lambda)^*(\widehat{P}(\rho_\lambda)^k)^*\widehat{P}(\rho_\lambda)^k\right)
\\=&|\phi(g_\mu)|^{2k}\langle v_{\rho_\lambda},\widehat{D}(\rho_\lambda)\widehat{D}(\rho_\lambda)^*v_{\rho_\lambda}\rangle
\\=&|\phi(g_\mu)|^{2k}(q-1)^{-2}\sum _{\alpha,\beta\in \mathbf{F}_q^*}\langle v_{\rho_\lambda},\rho_\lambda(h_\alpha h_\beta^{-1})v_{\rho_\lambda}\rangle
\\=&(q-1)^{-1}|\phi_\lambda(g_\mu)|^{2k}\sum _{\alpha\in \mathbf{F}_q^*}\phi_\lambda(h_\alpha)
\end{split}
\end{equation*}
by picking a basis $v_1,...,v_n$ for the representation with $v_1=v_{\rho_\lambda}$ a unit vector fixed by $\Sp_{2n}$ (if no such vector exists, then $\widehat{P}(\rho_\lambda)=0$), such that $\widehat{P}(\rho_\lambda)^k=\mathrm{diag}(\phi_\lambda(g_\mu)^k,0,...,0)$ (see for example, \cite[Proposition 4.7.2]{CST08}). Here the $\phi_\lambda$ are the spherical functions associated to the representations indexed by $\lambda$, and can be computed as $\phi_\lambda(g)=\langle v{\rho_\lambda},\rho(g)v_{\rho_\lambda}\rangle$. The inner product is chosen so that $\rho$ is a unitary representation.

Now $|\phi_\lambda(h_\alpha)|\leq 1$ (spherical functions are always pointwise bounded as $|\langle v_{\rho_\lambda},\rho(g)v_{\rho_\lambda}\rangle|\leq 1$ since $\rho(g)$ is unitary), and so
\begin{equation*}
\Tr\left(\widehat{D}(\rho_\lambda)\widehat{D}(\rho_\lambda)^*(\widehat{P}(\rho_\lambda)^k)^*\widehat{P}(\rho_\lambda)^k\right)\leq |\phi_\lambda(g_\mu)|^{2k}
\end{equation*}
giving the bound
\begin{equation}
\label{eq: DS bound}
\|P^{\ast k}\ast D-U\|^2\leq \frac{1}{4}\sum _{\lambda}d_{\lambda\cup\lambda} |\phi_{\lambda}|^{2k},
\end{equation}
where from now on $\phi_\lambda$ will be used to denote the eigenvalue $\phi_\lambda(g_\mu)$.

From \cite{H92}, $\Tr\left(\widehat{D}(\rho_\lambda)^*\widehat{D}(\rho_\lambda)\right)=0$ (and thus $\widehat{D}(\rho_\lambda)=0$) if $\lambda$ is a partition-valued function taking the value $(1^{2n})$ for some $\alpha$ of degree $1$ so these terms may be ignored. Actually these terms should be thought of as measuring the randomness of the determinant and explain why the initial randomness is needed.

The strategy and estimates to bound this sum are similar to those in \cite{H92}, with the appropriate modifications. The sum is split into three parts and each one is bounded individually. First, bound the spherical functions attaining negative values, then bound the the ones with small length, and finally bound the rest.

First, begin with some preliminary estimates which will be useful later. The following is a trivial bound for the negative terms.

\begin{lemma}
For all $\lambda:O(L)\rightarrow \mathcal{P}$, $\phi_\lambda\geq -\frac{1}{q^{2n-2}-1}$.
\end{lemma}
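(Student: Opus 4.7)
The plan is to read the bound directly off the explicit formula \eqref{eq:spherical function value}. Rewriting that formula as
\begin{equation*}
\phi_\lambda = \frac{q^{2n-2}(q^2-1)}{(q^{2n}-1)(q^{2n-2}-1)}\,\Sigma_\lambda \;-\; \frac{1}{q^{2n-2}-1},
\end{equation*}
where $\Sigma_\lambda$ denotes the sum over single-box removals from $\lambda$, the claim reduces to showing $\Sigma_\lambda \geq 0$, since the prefactor is manifestly positive for $q\geq 2$ and $n\geq 1$.

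I would establish $\Sigma_\lambda \geq 0$ termwise by a sign check on each factor in a summand. For $q>1$ and every $s$, the factor $1-q^{a(s)+2l(s)+1}$ in $c'_\lambda(q,q^2)$ is negative, so $c'_\lambda(q,q^2)$ has sign $(-1)^{|\lambda(\varphi)|}$ on each partition $\lambda(\varphi)$. Since $\lambda_1$ is obtained by deleting exactly one box, the ratio $c'_\lambda(q,q^2)/c'_{\lambda_1}(q,q^2)$ is negative, and dividing by $(1-q)<0$ produces a positive combined factor. Each $b_\lambda(s;q,q^2) = (1-q^{a(s)+2l(s)+2})/(1-q^{a(s)+2l(s)+1})$ is a ratio of two negatives, hence positive, so $\psi'_{\lambda/\lambda_1}>0$; and $q^{n(\lambda_1')-n(\lambda')}>0$ trivially. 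Multiplying these together, each summand is positive, so $\Sigma_\lambda \geq 0$, with equality exactly when the index set is empty, i.e.\ when $\lambda$ assigns the empty partition to every degree-$1$ orbit.

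There is no serious obstacle here --- the whole lemma is a sign-checking exercise given \eqref{eq:spherical function value}. The one small subtlety is that when $\lambda$ is supported entirely on $F$-orbits of degree at least $2$, the sum in \eqref{eq:spherical function value} is empty and $\Sigma_\lambda = 0$; in that case $\phi_\lambda = -1/(q^{2n-2}-1)$ exactly, showing the stated bound is sharp. Also, in interpreting the ratio $c'_\lambda/c'_{\lambda_1}$ for a partition-valued $\lambda$, all factors attached to orbits $\varphi \neq \varphi_0$ (the orbit from which the box is removed) cancel in pairs, so only the partition $\lambda(\varphi_0)$ contributes to the sign count, as used above.
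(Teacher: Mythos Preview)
Your proof is correct and follows essentially the same approach as the paper: both argue that the sum in \eqref{eq:spherical function value} is termwise nonnegative, so the lower bound comes entirely from the subtracted constant. The paper compresses this into a one-line remark that ``the signs cancel'', whereas you spell out the sign count on $c'_\lambda/c'_{\lambda_1}$, $(1-q)$, and $\psi'_{\lambda/\lambda_1}$ explicitly; you also observe that the bound is attained when $\lambda$ is supported away from degree-$1$ orbits, which the paper does not mention.
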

\begin{proof}
The first term in equation \ref{eq:spherical function value} is positive (even though the $c'_\lambda$ are potentially negative, there are an equal number of factors so the signs cancel) and the second term is $\frac{-1}{q^{2n-2}-1}$.
\end{proof}

\begin{lemma}
Let $\lambda:O(L)\rightarrow \mathcal{P}$. Then
\begin{equation}
\label{eq:1}
\phi_{\lambda}\leq \frac{q^{2n-2}(q^2-1)}{(q^{2n}-1)(q^{2n-2}-1)}\sum _{d(\varphi)=1}\sum _{(i,j)\in C(\lambda(\varphi))}\frac{(q^{2i}-1)(q^{j}-1)}{q^{j-1}(q-1)(q^2-1)},
\end{equation}
where $C(\lambda)$ denotes the set of removable boxes of $\lambda$.
\end{lemma}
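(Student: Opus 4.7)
The plan is to start from the exact formula for $\phi_\lambda$ given in Proposition \ref{prop: spherical function value} and bound each summand separately. The prefactor $\frac{q^{2n-2}(q^2-1)}{(q^{2n}-1)(q^{2n-2}-1)}$ is positive and the subtracted term $\frac{q^{2n}-1}{q^{2n-2}(q^2-1)}$ is positive as well, so dropping the latter yields an upper bound. It then suffices to show that for each removable box at position $(i,j)$ in $\lambda(\varphi)$ with $d(\varphi)=1$, the corresponding summand
\begin{equation*}
\frac{c'_{\lambda}(q,q^2)\psi'_{\lambda/\lambda_1}}{c'_{\lambda_1}(q,q^2)(1-q)}\,q^{n(\lambda_1')-n(\lambda')}
\end{equation*}
is bounded by $\frac{(q^{2i}-1)(q^j-1)}{q^{j-1}(q-1)(q^2-1)}$.

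The second step uses the explicit rewriting already carried out in the proof of Proposition \ref{prop: spherical function value}: writing $\lambda_1=\lambda\cup\lambda_2$ where $\lambda_2$ is $\lambda(\varphi)$ with a box removed, the summand equals
\begin{equation*}
\prod_{s\in C\setminus R}\frac{1-q^{a_\lambda(s)+2l_\lambda(s)+2}}{1-q^{a_{\lambda_2}(s)+2l_{\lambda_2}(s)+2}}\prod_{s\in R\setminus C}\frac{1-q^{a_\lambda(s)+2l_\lambda(s)+1}}{1-q^{a_{\lambda_2}(s)+2l_{\lambda_2}(s)+1}}\cdot q^{n(\lambda_2')-n(\lambda')},
\end{equation*}
where $C$ and $R$ are the column and row containing the removed box. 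For $s\in C\setminus R$ the arm lengths agree in $\lambda$ and $\lambda_2$ while the leg length drops by one; for $s\in R\setminus C$ the legs agree while the arm drops by one. Each ratio therefore has the form $\frac{q^{N+k}-1}{q^N-1}$ with $k=2$ or $k=1$.

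The key analytic input is the monotonicity statement that $N\mapsto \frac{q^{N+k}-1}{q^N-1}$ is strictly decreasing in $N$ for $q>1$ and $k\geq 1$, which is a one-line logarithmic-derivative check. Applying this with $a_\lambda(s)\geq 0$ for the column product and $l_\lambda(s)\geq 0$ for the row product replaces those extra quantities by $0$, after which both products telescope:
\begin{equation*}
\prod_{m=1}^{i-1}\frac{q^{2m+2}-1}{q^{2m}-1}=\frac{q^{2i}-1}{q^2-1},\qquad \prod_{m=1}^{j-1}\frac{q^{m+1}-1}{q^m-1}=\frac{q^j-1}{q-1}.
\end{equation*}

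Finally, since removing a box from column $j$ decreases $n(\lambda')$ by exactly $j-1$, one has $q^{n(\lambda_2')-n(\lambda')}=q^{1-j}$. Multiplying the three bounds and summing over degree-one orbits $\varphi$ and removable boxes $(i,j)\in C(\lambda(\varphi))$ gives the claim. The main (and only real) obstacle is justifying the monotonicity that allows setting the arm and leg lengths to zero; once that is in hand the rest is bookkeeping and telescoping.
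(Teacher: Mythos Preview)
Your proposal is correct and follows essentially the same approach as the paper's proof: drop the subtracted term, invoke the column/row product identity \eqref{eq:columns-and-rows} from Proposition~\ref{prop: spherical function value}, use the monotonicity $\frac{q^{y+z}-1}{q^y-1}\le\frac{q^{x+z}-1}{q^x-1}$ for $x\le y$ to replace the extraneous arm (resp.\ leg) lengths by $0$, telescope the resulting products to $\frac{q^{2i}-1}{q^2-1}$ and $\frac{q^j-1}{q-1}$, and combine with $q^{n(\lambda_1')-n(\lambda')}=q^{1-j}$. The paper states the monotonicity in the equivalent form ``for $z\le x\le y$'' rather than as decrease in $N$, but the content is identical.
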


\begin{proof}
First, suppose that $\lambda_1\subseteq \lambda$ is obtained by removing the box in row $i$ and column $j$. Then the inequality
\begin{equation*}
\frac{c'_{\lambda}(q,q^2)}{c'_{\lambda_1}(q,q^2)(1-q)}\psi'_{\lambda/\lambda_1}\leq \frac{(q^{2i}-1)(q^{j}-1)}{(q-1)(q^2-1)}.
\end{equation*}
holds. To see this, first note that for $z\leq x\leq y$,
\begin{equation*}
\frac{q^{y+z}-1}{q^y-1}\leq \frac{q^{x+z}-1}{q^x-1}
\end{equation*}
and so
\begin{equation*}
\begin{split}
&\frac{c'_{\lambda}(q,q^2)}{c'_{\lambda_1}(q,q^2)(1-q)}\psi'_{\lambda/\lambda_1}
\\=&\prod _{C\setminus R}\frac{1-q^{a_{\lambda}(s)+2l_{\lambda}(s)+2}}{1-q^{a_{\lambda_1}(s)+2l_{\lambda_1}(s)+2}}\prod _{s\in R\setminus C}\frac{1-q^{a_{\lambda}(s)+2l_{\lambda}(s)+1}}{1-q^{a_{\lambda_1}(s)+2l_{\lambda_1}(s)+1}}
\\\leq &\frac{(q^4-1)...(q^{2i}-1)}{(q^2-1)...(q^{2(i-1)}-1)}\frac{(q^2-1)...(q^{j}-1)}{(q-1)...(q^{j-1}-1)},
\end{split}
\end{equation*}
where Equation \eqref{eq:columns-and-rows} is used.

Finally, since
\begin{equation*}
q^{n(\lambda_1(\varphi))-n(\lambda(\varphi))}=1/q^{j-1}
\end{equation*}
if $(i,j)$ is removed from $\lambda(\varphi)$ to obtain $\lambda_1(\varphi)$, the result follows.
\end{proof}

\begin{lemma}
\label{lemma: negative case}
For the negative eigenvalues, the bound
\begin{equation*}
\sum _{\phi_{\lambda}\leq 0}d_{\lambda\cup\lambda}|\phi_{\lambda}|^{2(n+c)}\leq Ae^{-Bc}
\end{equation*}
holds for constants $A,B>0$, and for sufficiently large $n$.
\end{lemma}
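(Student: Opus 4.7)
The plan is to combine the uniform bound $|\phi_\lambda|\le (q^{2n-2}-1)^{-1}$ on negative eigenvalues (from the preceding lemma) with an estimate on the total dimension sum $\sum_\lambda d_{\lambda\cup\lambda}$. Pulling the uniform bound out of the sum gives
\begin{equation*}
\sum_{\phi_\lambda\le 0}d_{\lambda\cup\lambda}|\phi_\lambda|^{2(n+c)}\le (q^{2n-2}-1)^{-2(n+c)}\sum_{\|\lambda\|=n}d_{\lambda\cup\lambda}.
\end{equation*}
The remaining sum is exactly $\dim \mathbf{C}(\GL_{2n}/\Sp_{2n})=[\GL_{2n}:\Sp_{2n}]$ by the multiplicity-free decomposition recalled in the proof of Proposition \ref{prop: spherical function value}, and the standard order formulas for $\GL_{2n}(\mathbf{F}_q)$ and $\Sp_{2n}(\mathbf{F}_q)$ easily give $[\GL_{2n}:\Sp_{2n}]\le q^{2n^2-n}$.

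Combining these with the elementary estimate $q^{2n-2}-1\ge q^{2n-2}/2$ (valid for $n\ge 2,\ q\ge 2$), the right-hand side becomes
\begin{equation*}
4^{n+c}\,q^{-2n^2+3n-4(n-1)c}\;=\;4^n q^{-2n^2+3n}\cdot\bigl(4q^{-4(n-1)}\bigr)^c.
\end{equation*}
For $n\ge 2$ the base of the geometric factor is at most $1/4$, yielding exponential decay in $c$ at rate at least $\log 4$, while the prefactor $4^n q^{-2n^2+3n}\le q^{-2n^2+5n}$ is bounded (and in fact tends to zero rapidly) once $n$ is sufficiently large. Hence one may take $A$ and $B$ to be absolute constants independent of $n$.

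There is no substantial obstacle: the point is simply that the trivial upper bound $(q^{2n-2}-1)^{-1}$ is so small that, raised to the power $2(n+c)$, it overwhelms the crude estimate $q^{2n^2-n}$ on the dimension sum with vast room to spare. The only care needed is bookkeeping the powers of $q$ to verify that the resulting constants $A,B$ are independent of $n$, and choosing $n$ large enough so that the $n$-dependent prefactor is absorbed into $A$.
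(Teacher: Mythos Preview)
Your proof is correct and follows essentially the same approach as the paper: both use the uniform bound $|\phi_\lambda|\le (q^{2n-2}-1)^{-1}$ on negative eigenvalues and then control $\sum d_{\lambda\cup\lambda}$ by $|\GL_{2n}/\Sp_{2n}|\le q^{2n^2}$. Your use of the multiplicity-free decomposition to get $\sum d_{\lambda\cup\lambda}=[\GL_{2n}:\Sp_{2n}]$ directly is in fact slightly cleaner than the paper's detour through $\sum d_{\lambda\cup\lambda}^2$, and your explicit tracking of constants is more careful, but the argument is the same.
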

\begin{proof}
Note that $|\phi_{\lambda}|\leq (q^{2n-2}-1)^{-1}$ and 
\begin{equation*}
\begin{split}
\sum _{\phi_{\lambda}\leq 0}d_{\lambda\cup\lambda}&\leq \sum _{\phi_{\lambda}\leq 0}d_{\lambda\cup\lambda}^2
\\&\leq |\GL_{2n}(\mathbf{F}_q)/\Sp_{2n}(\mathbf{F}_q)|
\\&\leq q^{2n^2}
\end{split}
\end{equation*}
and so
\begin{equation*}
\sum _{\phi_\lambda\leq 0}d_{\lambda\cup\lambda}|\phi_\lambda|^{2(n+c)}\leq q^{2n^2}(q^{2n-2}-1)^{-2(n+c)}.
\end{equation*}
The exponential bound is an easy consequence.
\end{proof}

Next, consider the terms with small length. To establish the bound, the following lemmas are useful.

\begin{lemma}
Let $\lambda$ be a partition valued function with $l(\lambda(\varphi))=n-i$ for some $d(\varphi)=1$. Let $\widetilde{\lambda}$ be the partition valued function equal to $\lambda$ except at $\varphi$, where the first column is removed. Then
\begin{equation*}
d_{\lambda\cup\lambda}\leq \frac{q^{2i}\prod _{j=2i+1}^{2n} (q^j-1)}{\prod _{j=1}^{2n-2i}(q^j-1)}d_{\widetilde{\lambda}\cup\widetilde{\lambda}}.
\end{equation*}
\end{lemma}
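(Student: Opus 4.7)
The plan is to take the ratio of dimensions using the formula $d_\lambda=\psi_{\|\lambda\|}(q)\prod_\varphi q_\varphi^{n(\lambda(\varphi)')}H_{\lambda(\varphi)}(q_\varphi)^{-1}$ from Section~\ref{section: preliminaries}. Since $\widetilde{\lambda}$ agrees with $\lambda$ off the orbit $\varphi$, all factors at other orbits cancel, and because $d(\varphi)=1$ we have $q_\varphi=q$. Removing the first column of $\lambda(\varphi)$, which has length $n-i$, deletes $n-i$ boxes, so $\|\widetilde\lambda\|=i$ and the prefactor $\psi_{2n}(q)/\psi_{2i}(q)=\prod_{j=2i+1}^{2n}(q^j-1)$ appears immediately. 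What remains is to bound the factors at $\varphi$.

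For the hook product, set $\mu=\lambda(\varphi)\cup\lambda(\varphi)$ and $\widetilde\mu=\widetilde\lambda(\varphi)\cup\widetilde\lambda(\varphi)$. Removing the first column of a partition preserves the hook length of every surviving box (the arm and leg of each remaining box are unchanged), so $H_\mu(q)/H_{\widetilde\mu}(q)=\prod_{s}(q^{h_\mu(s)}-1)$ where $s$ ranges over the first column of $\mu$. Writing $\alpha_1\geq\cdots\geq\alpha_{n-i}\geq 1$ for the parts of $\lambda(\varphi)$, so $\mu$ has length $2(n-i)$, the first-column hook at row $r=2s-1$ is $\alpha_s+2(n-i-s)+1$ and at $r=2s$ it is $\alpha_s+2(n-i-s)$, for $s=1,\ldots,n-i$. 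The $k$-th smallest hook in this multiset is therefore at least $k$, which gives $H_\mu(q)/H_{\widetilde\mu}(q)\geq\prod_{j=1}^{2n-2i}(q^j-1)$.

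For the remaining $q$-power, I plan to use the standard identity $n(\nu')=\sum_j\binom{\nu_j}{2}$ (a double-counting consequence of $n(\nu)=\sum(i-1)\nu_i$) to compute $n(\mu')-n(\widetilde\mu')=\sum_j\bigl[\binom{\mu_j}{2}-\binom{\mu_j-1}{2}\bigr]=\sum_j(\mu_j-1)=|\mu|-l(\mu)=2|\lambda(\varphi)|-2(n-i)$. Since $|\lambda(\varphi)|\leq\|\lambda\|=n$, this is at most $2i$, contributing a factor of at most $q^{2i}$. Combining the prefactor, the reciprocal of the hook-product ratio, and the $q$-power yields the claimed inequality. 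The only real obstacle is the bookkeeping to see that the doubling of the rows of $\lambda(\varphi)\cup\lambda(\varphi)$ forces its first-column hook multiset to dominate $\{1,2,\ldots,2n-2i\}$; once that observation is in hand, each of the three estimates is essentially routine.
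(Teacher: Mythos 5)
Your proposal is correct, but it takes a different route from the paper: the paper disposes of this lemma in one line by citing Lemma 5.3 of \cite{H92}, applied to the partition-valued function $\lambda\cup\lambda$ (for which $l\bigl((\lambda\cup\lambda)(\varphi)\bigr)=2n-2i$), since the spherical representations are honest representations of $\GL_{2n}(\mathbf{F}_q)$. You instead reprove the needed instance directly from Green's dimension formula, and your three estimates all check out: the prefactor $\psi_{2n}(q)/\psi_{2i}(q)=\prod_{j=2i+1}^{2n}(q^j-1)$ is exact; the hook-product bound is sound because removing the first column preserves the hook length of every surviving box, and the first-column hooks of any partition (here $\mu_r+l(\mu)-r$ for $\mu=\lambda(\varphi)\cup\lambda(\varphi)$) are strictly decreasing in $r$, hence distinct positive integers whose $k$-th smallest is at least $k$, giving $H_\mu(q)/H_{\widetilde\mu}(q)\geq\prod_{j=1}^{2n-2i}(q^j-1)$; and the exponent computation $n(\mu')-n(\widetilde\mu')=|\mu|-l(\mu)=2|\lambda(\varphi)|-2(n-i)\leq 2i$ via $n(\nu')=\sum_j\binom{\nu_j}{2}$ and $|\lambda(\varphi)|\leq n$ is correct, with all factors at orbits other than $\varphi$ cancelling as you say. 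The trade-off is the usual one: the paper's citation is shorter and leans on a result of \cite{H92} already central to the eigenvalue analysis, while your argument is self-contained, verifiable from the formula stated in the preliminaries, and makes the origin of each of the three factors in the bound transparent; in substance it is a reproof of the relevant case of Hildebrand's lemma rather than a new idea.
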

\begin{proof}
This follows from \cite[Lemma 5.3]{H92} applied to $\lambda\cup\lambda$, since the spherical representations are representations of $\GL_{2n}(\mathbf{F}_q)$.
\end{proof}

\begin{lemma}
\label{Lemma: Dimension Bound}
Fix $\varphi$ with $d(\varphi)=1$ and $i$. Then
\begin{equation*}
\sum _{l(\lambda(\varphi))=n-i}d_{\lambda\cup\lambda}\leq Cq^{2i+4in-2i^2}
\end{equation*}
with $C>0$ independent of $i$. 
\end{lemma}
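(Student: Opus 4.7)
The strategy is to apply the preceding lemma term by term in the sum, reducing to a sum of dimensions over a smaller $\GL/\Sp$ quotient, which can be evaluated exactly via the multiplicity-free decomposition used in the proof of Proposition \ref{prop: spherical function value}. The remaining work is just to collect powers of $q$.

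The map $\lambda\mapsto\widetilde{\lambda}$ that strips the first column of $\lambda(\varphi)$ is injective and sends partition-valued functions with $\|\lambda\|=n$ and $l(\lambda(\varphi))=n-i$ into partition-valued functions $\widetilde{\lambda}$ with $\|\widetilde{\lambda}\|=i$ (the image being exactly those $\widetilde{\lambda}$ with $l(\widetilde{\lambda}(\varphi))\le n-i$). Applying the preceding lemma to each term of the sum yields
\[
\sum_{l(\lambda(\varphi))=n-i} d_{\lambda\cup\lambda}
\le \frac{q^{2i}\prod_{j=2i+1}^{2n}(q^j - 1)}{\prod_{j=1}^{2n-2i}(q^j - 1)}
\sum_{\|\widetilde{\lambda}\|=i} d_{\widetilde{\lambda}\cup\widetilde{\lambda}}.
\]

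To handle the inner sum, the same multiplicity-free decomposition
\[
\mathbf{C}(\GL_{2i}(\mathbf{F}_q)/\Sp_{2i}(\mathbf{F}_q))\cong \bigoplus_{\|\widetilde{\lambda}\|=i}V_{\widetilde{\lambda}\cup\widetilde{\lambda}}
\]
that appeared in the proof of Proposition \ref{prop: spherical function value}, applied now with $i$ in place of $n$, gives
\[
\sum_{\|\widetilde{\lambda}\|=i} d_{\widetilde{\lambda}\cup\widetilde{\lambda}} = |\GL_{2i}(\mathbf{F}_q)/\Sp_{2i}(\mathbf{F}_q)|,
\]
since the total dimension of a permutation representation equals the number of cosets.

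Finally I would carry out the $q$-exponent bookkeeping. The standard formulas give $|\GL_{2i}/\Sp_{2i}| = q^{i^2-i}\prod_{k=1}^{i}(q^{2k-1}-1)$, which is at most $Cq^{2i^2-i}$ with $C$ independent of $i$ (and $n$). For the prefactor, a direct count of exponents in the numerator and denominator, using $\sum_{j=2i+1}^{2n}j - \sum_{j=1}^{2n-2i}j = 4ni-4i^2$, yields
\[
\frac{q^{2i}\prod_{j=2i+1}^{2n}(q^j - 1)}{\prod_{j=1}^{2n-2i}(q^j - 1)} \le Cq^{2i+4ni-4i^2}.
\]
Multiplying these two bounds gives $Cq^{i+4ni-2i^2}$, which is certainly $\le Cq^{2i+4in-2i^2}$, as required. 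The main obstacle is merely keeping track of the exponents in Step~3; the structural content is entirely in the application of the preceding lemma and the multiplicity-free decomposition, both of which are already available.
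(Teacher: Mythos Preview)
Your proof is correct and follows essentially the same route as the paper: apply the preceding lemma termwise, bound $\sum d_{\widetilde{\lambda}\cup\widetilde{\lambda}}$ by $|\GL_{2i}(\mathbf{F}_q)/\Sp_{2i}(\mathbf{F}_q)|$ via the multiplicity-free decomposition, and then collect $q$-exponents. Your version is in fact slightly cleaner, since you correctly invoke $\sum_{\|\widetilde{\lambda}\|=i} d_{\widetilde{\lambda}\cup\widetilde{\lambda}} = |\GL_{2i}/\Sp_{2i}|$ directly, whereas the paper writes this step with a squared dimension (a typo) before using the first-power bound in the final line.
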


\begin{proof}
For each $\lambda$ with $l(\lambda(\varphi))=n-i$, if $\widetilde{\lambda}$ denotes the partition valued function equal to $\lambda$ except at $\varphi$, where the first column is removed, then
\begin{equation*}
\sum _{l(\lambda(\varphi))=n-i}d_{\widetilde{\lambda}\cup\widetilde{\lambda}}^2\leq |\GL_{2i}(\mathbf{F}_q)/\Sp_{2i}(\mathbf{F}_q)|<q^{2i^2}.
\end{equation*}
Next, note that there is $C>0$ such that
\begin{equation*}
\frac{q}{q-1}\frac{q^2}{q^2-1}\cdots\frac{q^n}{q^n-1}<C
\end{equation*}
for all $n,q$.

Now
\begin{equation*}
\begin{split}
d_{\lambda\cup\lambda}&\leq \frac{q^{2i}\prod _{j=2i+1}^{2n} (q^j-1)}{\prod _{j=1}^{2n-2i}(q^j-1)}d_{\widetilde{\lambda}\cup\widetilde{\lambda}}
\\&\leq C\frac{q^{2i}\prod _{j=2i+1}^{2n} q^j}{\prod _{j=1}^{2n-2i}q^j}d_{\widetilde{\lambda}\cup\widetilde{\lambda}}
\\&= Cq^{2i}(q^{2i})^{2n-2i}d_{\widetilde{\lambda}\cup\widetilde{\lambda}}
\end{split}
\end{equation*}
and thus
\begin{equation*}
\begin{split}
\sum _{l(\lambda(\varphi))=n-i}d_{\lambda\cup\lambda}&\leq Cq^{4in+2i-4i^2}\sum _{l(\lambda(\varphi))=n-i}d_{\widetilde{\lambda}\cup\widetilde{\lambda}}
\\&\leq Cq^{2i+4in-2i^2}.
\end{split}
\end{equation*}
\end{proof}

Now the terms with small length can be bounded.
\begin{lemma}
\label{Lemma:Small Case}
Let $F$ denote the set of partition valued functions $\lambda$ such that $l(\lambda(\varphi))\leq n-n^{0.6}$ for all $d(\varphi)=1$, and such that $\phi_{\lambda}>0$. Then
\begin{equation*}
\sum _{\lambda\in F}d_{\lambda\cup\lambda}|\phi_{\lambda}|^{2(n+c)}<Ae^{-Bc}
\end{equation*}
for large enough $n$, with constants $A,B>0$.
\end{lemma}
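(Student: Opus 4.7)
The plan is to combine the upper bound on $\phi_\lambda$ from Equation~\eqref{eq:1} with Lemma~\ref{Lemma: Dimension Bound}. For $\lambda \in F$, set $i_*(\lambda) = n - \max_{d(\varphi)=1} l(\lambda(\varphi))$, so that $i_*(\lambda) \geq n^{0.6}$. Any removable box $(i,j)$ of some $\lambda(\varphi)$ with $d(\varphi)=1$ satisfies $i \leq l(\lambda(\varphi)) \leq n - i_*(\lambda)$. Combining the elementary estimate $(q^{2i}-1)(q^j-1)/(q^{j-1}(q-1)(q^2-1)) \leq q^{2i+1}/((q-1)(q^2-1))$ with the observation that the total number of removable corners across all degree-1 $\varphi$ is at most $\sum_{d(\varphi)=1} l(\lambda(\varphi)) \leq \|\lambda\| = n$, and using the prefactor asymptotics $q^{2n-2}(q^2-1)/((q^{2n}-1)(q^{2n-2}-1)) \asymp q^{-2n}$, Equation~\eqref{eq:1} will yield
\begin{equation*}
\phi_\lambda \leq C n \, q^{-2 i_*(\lambda)}
\end{equation*}
for a constant $C = C(q) > 0$.

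Next, I would group by the value $i = i_*(\lambda)$. A union bound over the (at most $q-1$) degree-1 characters $\varphi$ that can achieve $\max_\varphi l(\lambda(\varphi)) = n - i$, combined with Lemma~\ref{Lemma: Dimension Bound}, gives
\begin{equation*}
\sum_{\lambda \in F,\, i_*(\lambda)=i} d_{\lambda\cup\lambda} \leq (q-1) C' q^{2i + 4in - 2i^2}.
\end{equation*}
Combining with the eigenvalue bound,
\begin{equation*}
\sum_{\lambda \in F} d_{\lambda\cup\lambda} |\phi_\lambda|^{2(n+c)} \leq C''(Cn)^{2(n+c)} \sum_{i \geq n^{0.6}} q^{-2i(i-1) - 4ci},
\end{equation*}
where the simplified exponent $-2i(i-1)-4ci$ comes from $-4(n+c)i + 2i + 4in - 2i^2$. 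Since this exponent is strictly decreasing in $i$ for $i \geq 1$ and $c \geq 0$, the sum is dominated by the term $i = \lceil n^{0.6} \rceil$, contributing a $q$-exponent of roughly $-2n^{1.2} - 4cn^{0.6}$.

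The last step is to check that the polynomial factor $(Cn)^{2(n+c)}$ is absorbed. Its contribution to the $q$-exponent is $O((n+c)\log n)$: for large $n$ the constant-in-$c$ piece $O(n\log n)$ is dominated by $2n^{1.2}$, while the $c$-dependent piece $O(c\log n)$ is dominated by $4c n^{0.6}$. This leaves an overall bound whose $q$-exponent is below $-n^{1.2}/2 - 2cn^{0.6}$ for $n$ large, giving the desired form $A e^{-Bc}$. The main obstacle is exactly this polynomial blow-up $(Cn)^{2(n+c)}$ coming from raising $\phi_\lambda \leq Cn\, q^{-2i_*}$ to the power $2(n+c)$; it is controlled only because the threshold $n^{0.6}$ is chosen large enough that the $-2i_*^2 \sim -2n^{1.2}$ term from Lemma~\ref{Lemma: Dimension Bound} comfortably dominates the $O(n\log n)$ term, which is the whole reason for the choice of exponent $0.6$ rather than $0.5$.
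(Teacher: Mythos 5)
Your proposal is correct and follows essentially the same route as the paper: bound $\phi_\lambda$ via Equation \eqref{eq:1} by roughly $nq^{-2i}$ where $i=n-\max_{d(\varphi)=1}l(\lambda(\varphi))\geq n^{0.6}$, group $\lambda$ by this $i$ and apply Lemma \ref{Lemma: Dimension Bound}, and observe that the $-2i^2$ term beats the $(Cn)^{2(n+c)}$ blow-up, which is exactly why the threshold $n^{0.6}$ is chosen. If anything, your bookkeeping of the exponent $2(n+c)$ (rather than dropping to $2n$) makes the $e^{-Bc}$ decay more explicit than in the paper's own write-up.
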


\begin{proof}
First, note that as a consequence of \eqref{eq:1}, the simpler inequality
\begin{equation*}
\phi_{\lambda}\leq \frac{q^{2n-1}}{(q^{2n}-1)(q^{2n-2}-1)}\sum _{d(\varphi)=1}\sum _{(i,j)\in C(\lambda(\varphi))}\frac{(q^{2i}-1)}{(q-1)}
\end{equation*}
holds. Then note that $i\leq l(\lambda(\varphi))$ and so for some $\varepsilon>0$ this is bounded by
\begin{equation*}
\sum _{d(\varphi)=1}n\frac{q^{2l(\lambda(\varphi))}(1+\varepsilon)}{q^{2n-2}}\leq qn\frac{q^{2\max_{d(\varphi)=1} l(\lambda(\varphi))}(1+\varepsilon)}{q^{2n-2}},
\end{equation*}
where $\varepsilon<1$ for large enough $n$.

Now sum over $F$ to obtain
\begin{equation*}
\sum _{\lambda\in F}d_{\lambda\cup\lambda}(\phi_{\lambda})^{2n}\leq \sum _{d(\varphi)=1}\sum _{i=n^{0.6}}^n\sum _{l(\lambda(\varphi))=n-i}d_{\lambda\cup\lambda}\left(\frac{qn(1+\varepsilon)}{q^{2i-2}}\right)^{2n}
\end{equation*}
and apply Lemma \ref{Lemma: Dimension Bound} to obtain the bound
\begin{equation*}
\begin{split}
\sum _{\lambda\in F}d_{\lambda\cup\lambda}(\phi_{\lambda})^{2n}&\leq \sum _{d(\alpha)=1}\sum _{i=n^{0.6}}^n\frac{Cq^{2i+4in-2i^2}(n(1+\varepsilon))^{2n}}{(q^{2i-3})^{2n}}
\\&\leq C(q-1)nq^{n(-2n^{0.2}+8+C'\log n)},
\end{split}
\end{equation*}
where $C'$ is a positive constant. For large enough $n$ this is exponentially small.
\end{proof}

Finally, bound the remaining case of large length. 

\begin{lemma}
\label{lemma:large length}
If $F$ denotes the set of partition-valued functions $\lambda$ such that $\phi_{\lambda}>0$ and there is some $\varphi$ with $d(\varphi)=1$ and $l(\lambda(\varphi))>n-n^{0.6}$, then
\begin{equation*}
\sum _{\lambda\in F}d_{\lambda\cup\lambda}|\phi_{\lambda}|^{2(n+c)}\leq Ae^{-Bc}
\end{equation*}
for constants $A,B>0$ and sufficiently large $n$.
\end{lemma}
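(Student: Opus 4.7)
The plan is to tighten the eigenvalue estimate used in Lemma~\ref{Lemma:Small Case}, which becomes useless when $l(\lambda(\varphi_0))$ is close to $n$. First, since $\|\lambda\| = n$ and $|\lambda(\varphi_0)| \geq l(\lambda(\varphi_0)) > n - n^{0.6}$, we have $\sum_{\varphi \neq \varphi_0} d(\varphi) |\lambda(\varphi)| < n^{0.6}$, so at most $n^{0.6}$ boxes appear across all components of $\lambda$ outside $\varphi_0$; in particular, for $n$ large enough the degree-$1$ orbit $\varphi_0$ is unique. Setting $i = n - l(\lambda(\varphi_0))$, we have $1 \leq i < n^{0.6}$; the excluded case $i = 0$ corresponds to $(\lambda \cup \lambda)(\varphi_0) = (1^{2n})$ and was noted after \eqref{eq: DS bound} to contribute nothing.

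The core of the proof is the sharp asymptotic $\phi_\lambda \sim q^{-2i}$, with $\phi_\lambda/q^{-2i} = 1 + O(q^{-2n})$, uniformly for $\lambda \in F$. To obtain this, I would analyze the formula of Proposition~\ref{prop: spherical function value} termwise. Because $\lambda(\varphi_0)$ is close to $(1^n)$, the bracketed sum in \eqref{eq:spherical function value} is dominated by the removable box at the foot of column~$1$ (at row $n-i$, column~$1$), whose contribution---computed by a telescoping of the $c'$ and $\psi'$ products as in \eqref{eq:columns-and-rows}---has leading order $(q^{2n-2i} - 1)/(q^2 - 1)$. The subtracted constant $(q^{2n}-1)/(q^{2n-2}(q^2-1))$ is a lower-order term in $q$, and after multiplying by the prefactor $q^{2n-2}(q^2-1)/((q^{2n}-1)(q^{2n-2}-1))$ the main term of $\phi_\lambda$ telescopes to $q^{2n-2i-2}/(q^{2n-2} - 1)$. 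Contributions from the at most $O(n^{0.6})$ other removable boxes of $\lambda(\varphi_0)$, and from removable boxes at other degree-$1$ orbits (whose partitions collectively contain at most $n^{0.6}$ boxes), are smaller by factors of $q^{-2}$ or more, and can be absorbed into the $O(q^{-2n})$ error.

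Raising to the $2(n+c)$-th power, the correction $(1 + O(q^{-2n}))^{2(n+c)}$ is bounded by an absolute constant for $c \leq q^n$ (and the overall bound is trivially small for larger $c$). Combining with Lemma~\ref{Lemma: Dimension Bound},
\begin{equation*}
\sum_{\lambda \in F} d_{\lambda \cup \lambda}\, |\phi_\lambda|^{2(n+c)}
\leq C \sum_{d(\varphi_0) = 1} \sum_{i=1}^{\lfloor n^{0.6} \rfloor} q^{2i + 4in - 2i^2}\, q^{-4i(n+c)}
= C' \sum_{i \geq 1} q^{2i - 2i^2 - 4ic},
\end{equation*}
which is dominated by the $i = 1$ term of size $O(q^{-4c})$, giving the required exponential bound.

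The main obstacle is the refined eigenvalue bound itself: the estimate has essentially no slack, since in explicit cases (e.g.\ $\lambda(\varphi_0) = (k, 1^{n-k})$) one verifies $\phi_\lambda/q^{-2i} \to 1$ as $n \to \infty$, so any constant loss $C > 1$ in the eigenvalue bound would contribute a factor $C^{2n}$ that could overwhelm the dimension estimate. The analogous step in \cite{H92} for large-length partitions under the random transvection walk handles this via explicit manipulation of the hook-length product; the same strategy should adapt here, with the $c'$ and $\psi'$ telescoping from the proof of Proposition~\ref{prop: spherical function value} playing the corresponding role.
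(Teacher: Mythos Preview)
Your approach is essentially the same as the paper's: isolate the dominant removable box at the foot of column~$1$ of $\lambda(\varphi_0)$, show $\phi_\lambda \le (1+o(1))q^{-2i}$ with a multiplicative error that survives the $2(n+c)$-th power, and then combine with Lemma~\ref{Lemma: Dimension Bound} to get the geometric sum $\sum_i q^{2i-2i^2-4ic}$. The paper obtains $\phi_\lambda\le(1+2q^{-n})q^{-2i}$ rather than your $1+O(q^{-2n})$, but the difference is immaterial.

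The one spot where your write-up is looser than it needs to be is the sentence ``smaller by factors of $q^{-2}$ or more, and can be absorbed into the $O(q^{-2n})$ error.'' A factor-$q^{-2}$ saving on each of $O(n^{0.6})$ terms does not by itself yield an $O(q^{-2n})$ correction. The paper's version of this step is to note that the row indices $i'$ of \emph{all} the remaining removable boxes (both in $\lambda(\varphi_0)$ outside column~$1$ and in the other degree-$1$ components) satisfy $\sum i'\le n^{0.6}$, so by \eqref{eq:1} their total contribution inside the bracket is at most $q^{2n^{0.6}}$; after the prefactor $\sim q^{-2n}$ this is $\le q^{-n}q^{-2i}$ for large $n$. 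You already have the ingredient for this (you note the $n^{0.6}$-box constraint), so tightening the sentence to use $\sum i'$ rather than a per-term $q^{-2}$ factor closes the argument.
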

\begin{proof}
Consider Equation \eqref{eq:1}, first bounding the term $(n-i,1)$ in $C(\lambda(\varphi))$ corresponding to the $\varphi$ witnessing $\lambda(\varphi)>n^{0.6}$, so $i\leq n^{0.6}$. This gives
\begin{equation*}
\frac{q^{2n-2}(q^{2n-2i}-1)}{(q^{2n}-1)(q^{2n-2}-1)}\leq \frac{1+q^{-n}}{q^{2i}}
\end{equation*}
for large enough $n$.

Now if $D$ denotes all other terms in the sum over $\varphi$ and $C(\lambda(\varphi))$ except for this $(n-i,1)$ term, then
\begin{equation*}
\sum _{D}\frac{(q^j-1)(q^{2i}-1)}{q^{j-1}(q-1)(q^2-1)}\leq q^{2n^{0.6}}
\end{equation*}
because the $q^{2i}$ in the remaining summands have $\sum i\leq n^{0.6}$ and so the sum is at most $q^{2n^{0.6}}$ since $q^x+q^y\leq q^{x+y}$ for $x,y\geq 1$. Then for large enough $n$,
\begin{equation*}
\frac{q^{2n-2}(q^2-1)}{(q^{2n}-1)(q^{2n-2}-1)}q^{2n^{0.6}}\leq \frac{q^{-n}}{q^{2i}}
\end{equation*}
and so
\begin{equation*}
\phi_{\lambda}\leq \frac{1+2q^{-n}}{q^{2i}}.
\end{equation*}

Now, use Lemma \ref{Lemma: Dimension Bound} to conclude
\begin{equation*}
\begin{split}
&\sum _{d(\varphi)=1}\sum _{i=1}^{n^{0.6}}\sum _{l(\lambda(\varphi))=n-i}d_{\lambda\cup\lambda}\phi_{\lambda}^{2(n+c)}
\\\leq & \sum _{d(\varphi)=1}\sum _{i=1}^{n^{0.6}} Cq^{2i+4in-2i^2}(1+2q^{-n})^{2(n+c)}q^{-4i(n+c)}
\\=&(q-1)\sum _{i=1}^{n^{0.6}} C(1+2q^{-n})^{2(n+c)}q^{2i-2i^2-4ic}
\\\leq &\sum _{i=1}^{n^{0.6}} C(1+2q^{-n})^{2(n+c)}q^{3-2i-4ic}.
\end{split}
\end{equation*}
Now note that $(1+2q^{-n})^{2n}\rightarrow 1$ as $n\rightarrow \infty$ and so for large enough $n$
\begin{equation*}
\begin{split}
\sum _{d(\varphi)=1}\sum _{i=1}^{n^{0.6}}\sum _{l(\lambda(\varphi))=n-i}d_{\lambda\cup\lambda}\phi_{\lambda}^{2(n+c)}&\leq C\sum _{i=1}^{n^{0.6}}(1+2q^{-n})^{2c}q^{3-2i-4ic}
\\&\leq Cq^3\sum _{i=1}^{n^{0.6}}(q^{-2-3c})^i
\\&\leq Cq^{1-3c}
\end{split}
\end{equation*}
and this establishes the lemma.
\end{proof}

Finally, Theorem \ref{thm:upper bound} follows easily from Lemmas \ref{lemma: negative case}, \ref{Lemma:Small Case} and \ref{lemma:large length}, noting that these lemmas cover all non-zero terms in Equation \eqref{eq: DS bound}.

\section{Lower Bounds}
\label{sec: lower bound}
This section is devoted to proving Theorem \ref{thm:lower bound}. This is done by showing that if only $n-c$ steps are taken, then the proportion of $\GL_{2n}(\mathbf{F}_q)$ which can be reached is exponentially small, and so the walk cannot be mixed.

Similar ideas appear in \cite{H92}, where it is shown that for the random walk on $\GL_n(\mathbf{F}_q)$, after taking $n-c$ steps the random element still has a large fixed subspace. This relies on results in an unpublished manuscript of Rudvalis and Shinoda about the proportion of such elements in $\GL_n(\mathbf{F}_q)$. It will be shown that if only $n-c$ steps are taken, then $g^TJg-J$ has an isotropic subspace of dimension $n+c$ (that is, a subspace where the form restricts to $0$). This is done by using results in \cite{BKS90} to reduce the problem to the computation in the $\GL_n(\mathbf{F}_q)$ case.

\begin{proposition}
\label{prop:supp estimate}
Let 
\begin{equation*}
A_c=\{g\in \GL_{2n}(\mathbf{F}_q)|g=kg_0,\dim(\ker(g_0-I))\geq n+c, k\in \Sp_{2n}(\mathbf{F}_q)\}.
\end{equation*}
Then
\begin{equation*}
\frac{|A_c|}{|\GL_{2n}(\mathbf{F}_q)|}\leq  \frac{C}{q^c}
\end{equation*}
for some constant $C$ independent of $n$.
\end{proposition}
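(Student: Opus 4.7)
The plan is to use Proposition \ref{prop: double coset size} to convert $|A_c|/|\GL_{2n}(\mathbf{F}_q)|$ into a ratio involving $\GL_n(\mathbf{F}_q)$-conjugacy classes, and then to apply the Rudvalis-Shinoda estimate used by Hildebrand in \cite{H92}. First I would observe that $F_c:=\{g\in\GL_{2n}(\mathbf{F}_q):\dim\ker(g-I)\geq n+c\}$ is invariant under $\GL_{2n}(\mathbf{F}_q)$-conjugation because conjugation preserves kernel dimensions. Hence $gF_c=F_cg$ for every $g$, giving $A_cK=KF_cK=KKF_c=KF_c=A_c$, so $A_c$ is also right-$K$-invariant, and therefore a union of double cosets. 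Since all elements of a single double coset are $\GL_{2n}(\mathbf{F}_q)$-conjugate and $\dim\ker(\cdot-I)$ is a conjugation invariant, the condition $Kg_\mu K\subset A_c$ reduces to $g_\mu\in F_c$. Using $g_\mu=\mathrm{diag}(M_\mu,I)$, a direct computation gives $\dim\ker(g_\mu-I)=n+l(\mu(f_1))$ with $f_1=x-1$, so $Kg_\mu K\subset A_c$ if and only if $l(\mu(f_1))\geq c$.

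By Proposition \ref{prop: double coset size},
\begin{equation*}
\frac{|A_c|}{|\GL_{2n}(\mathbf{F}_q)|}=\frac{\sum_{\mu:l(\mu(f_1))\geq c}|C_\mu|_{q\mapsto q^2}}{\sum_\mu|C_\mu|_{q\mapsto q^2}}.
\end{equation*}
Denote the right-hand probability by $p_\mu$ and let $p'_\mu=|C_\mu|/|\GL_n(\mathbf{F}_q)|=1/a_\mu(q)$ be the probability of the conjugacy class $C_\mu$ under the uniform distribution on $\GL_n(\mathbf{F}_q)$. The Rudvalis-Shinoda estimate, as used in \cite{H92}, asserts that $\sum_{\mu:l(\mu(f_1))\geq c}p'_\mu\leq C/q^c$. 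To obtain the proposition it therefore suffices to show that $p_\mu\leq C'p'_\mu$ for some constant $C'$ independent of $\mu$, $n$, and $q$.

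Using the order formulas for $|K|$, $|\GL_n(\mathbf{F}_q)|$, and $|\GL_{2n}(\mathbf{F}_q)|$ together with the explicit formula for $a_\mu$, the ratio can be put in the form
\begin{equation*}
\frac{p_\mu}{p'_\mu}=\prod_{i=1}^n\frac{1-q^{-2i}}{1-q^{-(2i-1)}}\cdot\prod_{f\in O(M)}q_f^{-2n(\mu(f))}\prod_{i\geq 1}\prod_{j=1}^{m_i(\mu(f))}\frac{1}{1+q_f^{-j}}.
\end{equation*}
The second factor is at most $1$ since each of its terms is at most $1$. The first factor is bounded above by $1/\prod_{k\geq 1}(1-q^{-k})$, a convergent infinite product, and this bound is uniform in $n$ and in $q\geq 2$. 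Hence $p_\mu/p'_\mu$ is uniformly bounded, and summing over $\mu$ with $l(\mu(f_1))\geq c$ yields the desired estimate. The main obstacle is this explicit comparison: setting up and simplifying the ratio $p_\mu/p'_\mu$ requires a careful calculation relating $a_\mu(q^2)$ to $a_\mu(q)$, though once the displayed form is obtained the uniform bound follows from standard $q$-product estimates, and the conceptual heart of the argument is simply the reduction to the $\GL_n(\mathbf{F}_q)$ calculation of Rudvalis-Shinoda via Proposition \ref{prop: double coset size}.
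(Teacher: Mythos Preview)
Your overall strategy matches the paper's: reduce to double cosets via Proposition~\ref{prop: double coset size}, compare $|C_\mu|_{q\mapsto q^2}$ with $|C_\mu|$, and invoke the Rudvalis--Shinoda estimate from \cite{H92}. The computation of $p_\mu/p'_\mu$ is correct and, once written in your displayed form, arguably cleaner than the paper's step-by-step bound.

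There is, however, a genuine gap in the step identifying which double cosets lie in $A_c$. The assertion ``all elements of a single double coset are $\GL_{2n}(\mathbf{F}_q)$-conjugate'' is false: a double coset $Kg_\mu K$ generically meets many conjugacy classes (indeed, there are far more conjugacy classes in $\GL_{2n}$ than double cosets). Without this claim you cannot conclude that $Kg_\mu K\subset A_c$ forces $g_\mu\in F_c$. You have correctly shown $A_c=KF_c=KF_cK$ is bi-$K$-invariant, and the implication $g_\mu\in F_c\Rightarrow Kg_\mu K\subset A_c$ is immediate, but that only gives $|A_c|\geq\sum_{l(\mu(f_1))\geq c}|Kg_\mu K|$, the wrong inequality for the proposition.

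The paper supplies the missing direction by passing to an invariant that \emph{is} constant on double cosets: the form $g^TJg-J$. If $g=kg_0$ with $k\in K$ then $g^TJg-J=g_0^TJg_0-J$, so the dimension of a maximal isotropic subspace of $g^TJg-J$ depends only on $Kg$, and one checks it is also right-$K$-invariant. Since $\ker(g_0-I)$ is isotropic for $g_0^TJg_0-J$, membership in $A_c$ forces $\dim I(g^TJg-J)\geq n+c$, and an explicit computation on the representative $g_\mu=\mathrm{diag}(M_\mu,I)$ shows this is exactly $\dim\ker(M_\mu-I)\geq c$, i.e.\ $l(\mu(f_1))\geq c$. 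With this repair your argument goes through unchanged.
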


\begin{proof}
Note that if $g\in A_c$, then $g^TJg-J=g_0^TJg_0-J$ is an alternating form with an isotropic subspace of dimension $n+c$, because $g_0$ fixes a subspace of dimension $n+c$. Now, if $I(\omega)$ denotes a maximal isotropic subspace of the form $\omega$, then
\begin{equation*}
\begin{split}
|A_c|&\leq \sum _{\dim(I(g_\mu^TJg_\mu-J))\geq n+c}|Kg_\mu K|
\\&=\sum _{\dim(\ker(g_\mu-I))\geq n+c}|Kg_\mu K|
\end{split}
\end{equation*}
because the set can be broken up into double cosets as it is $\Sp_{2n}(\mathbf{F}_q)$-bi-invariant since
\begin{equation*}
\dim(I(g_\mu^TJg_\mu-J))=\dim(I((k_1g_\mu k_2)^TJk_1g_\mu k_2-J)).
\end{equation*}
Thus the dimension of the maximal isotropic subspace of
\begin{equation*}
\begin{split}
g_\mu^TJg_\mu-J&=\left(\begin{array}{cc}
M_\mu^T&0\\
0&I\\
\end{array}\right)
\left(\begin{array}{cc}
0&I\\
-I&0\\
\end{array}\right)
\left(\begin{array}{cc}
M_\mu&0\\
0&I\\
\end{array}\right)-\left(\begin{array}{cc}
0&I\\
-I&0\\
\end{array}\right)
\\&=\left(\begin{array}{cc}
0&M_\mu^T-I\\
I-M_\mu&0\\
\end{array}\right)
\end{split}
\end{equation*}
is just
\begin{equation*}
2\dim(\ker(M_\mu-I))+\frac{1}{2}(2n-2\dim(\ker(M_\mu-I)))=n+\dim(\ker(M_\mu-I))
\end{equation*}
and so $\dim(I(g_\mu^TJg_\mu-J))\geq n+c$ is equivalent to $\dim(\ker(M_\mu-I))\geq c$. 

But $|Kg_\mu K|=|\Sp_{2n}(\mathbf{F}_q)||C_\mu|_{q\mapsto q^2}$ by Proposition \ref{prop: double coset size} and so
\begin{equation*}
|A_c|=|\Sp_{2n}(\mathbf{F}_q)| \sum _{\dim(\ker(M_\mu-I))\geq c}|C_{\mu}|_{q\mapsto q^2}.
\end{equation*}
Next, write
\begin{equation*}
|C_{\mu}|_{q\mapsto q^2}= |C_{\mu}|\frac{|C_{\mu}|_{q\mapsto q^2}}{|C_{\mu}|}
\end{equation*}
and note that
\begin{equation*}
\begin{split}
\frac{|C_{\mu}|_{q\mapsto q^2}}{|C_{\mu}|}&=\frac{|\GL_n(\mathbf{F}_q)|_{q\mapsto q^2}}{|\GL_n(\mathbf{F}_q)|}\frac{a_\mu(q)}{a_\mu(q^2)}
\\&= \frac{|\GL_n(\mathbf{F}_q)|_{q\mapsto q^2}}{|\GL_n(\mathbf{F}_q)|}\frac{q^{n+2n(\mu)}\prod \psi_{m_i(\mu)}(q^{-1})}{q^{2n+4n(\mu)}\prod \psi_{m_i(\mu)}(q^{-2})}
\\&\leq \frac{|\GL_n(\mathbf{F}_q)|_{q\mapsto q^2}}{|\GL_n(\mathbf{F}_q)|}\frac{1}{q^n\prod_{j} \prod_{i=1}^{m_j(\mu)} (1+q^{-i})}
\\&\leq \frac{|\GL_n(\mathbf{F}_q)|_{q\mapsto q^2}}{|\GL_n(\mathbf{F}_q)|}q^{-n}.
\end{split}
\end{equation*}
Then
\begin{equation*}
\begin{split}
&|\Sp_{2n}(\mathbf{F}_q)| \sum _{\dim(\ker(M_\mu-I))\geq c}|C_{\mu}|_{q\mapsto q^2}
\\\leq & |\Sp_{2n}(\mathbf{F}_q)|\frac{|\GL_n(\mathbf{F}_q)|_{q\mapsto q^2}}{|\GL_n(\mathbf{F}_q)|}q^{-n}\sum _{\dim(\ker(M_\mu-I))\geq c}|C_{\mu}|.
\end{split}
\end{equation*}
Now from \cite[\S 6]{H92}
\begin{equation*}
\sum _{\dim(\ker(M_\mu-I))\geq c}|C_{\mu}|\leq \frac{4}{q^c}|\GL_n(\mathbf{F}_q)|
\end{equation*}
and note that
\begin{equation*}
\begin{split}
\frac{|\Sp_{2n}(\mathbf{F}_q)||\GL_n(\mathbf{F}_q)|_{q\mapsto q^2}q^{-n}}{|\GL_{2n}(\mathbf{F}_q)|}&= \frac{q^{n^2}\prod _{i=1}^n(q^{2i}-1)q^{n^2-n}\prod _{i=1}^n(q^{2i}-1)}{q^nq^{n(2n-1)}\prod _{i=1}^{2n}(q^i-1)}
\\&=\prod _{i=1}^n\frac{q^{2i}-1}{q^{2i}-q}
\end{split}
\end{equation*}
and this is bounded (independent of $n$ and $q$) by some $C$. Thus,
\begin{equation*}
\frac{|A_c|}{|\GL_{2n}(\mathbf{F}_q)|}\leq \frac{4C}{q^c}.
\end{equation*}
\end{proof}

Then Theorem \ref{thm:lower bound} follows easily because $P^{\ast(n-c)}$ is supported on $A_c$.

\begin{proof}[Proof of Theorem \ref{thm:lower bound}]
First, note that $P^{\ast(n-c)}$ is supported on $A_c$ because the random element may be written as
\begin{equation*}
k_0g_\mu k_1\cdots k_{n-1}g_\mu k_n =k_0'\prod_{i=1}^{n-c}\left((k_i')^{-1} g_\mu k_i'\right),
\end{equation*}
where $k_n'=k_n$, and $k_i'=k_ik_{i+1}'$, and note that each $(k_i')^{-1} g_\mu k_i'$ is a transvection, and so $\prod_{i=1}^{n-c} (k_i')^{-1} g_\mu k_i'$ is the product of $n-c$ transvections, and thus has an $(n+c)$-dimensional $1$-eigenspace. Thus, $P^{\ast(n-c)}$ is supported on $A_c$. Then as the support of $D$ has only $q$ elements,
\begin{equation*}
\begin{split}
\|P^{\ast(n-c)}\ast D-U\|&\geq |P^{\ast(n-c)}\ast D(A_c\cdot \mathrm{supp}(D))-U(A_c\cdot \mathrm{supp}(D))|
\\&\geq 1-\frac{q|A_c|}{|\GL_{2n}(\mathbf{F}_q)|}
\\&\geq 1-\frac{C}{q^{c-1}}
\end{split}
\end{equation*}
by Proposition \ref{prop:supp estimate}.
\end{proof}

\bibliography{bibliography}{}
\bibliographystyle{amsplain}

\end{document}